\numberwithin{equation}{section}
\newtheorem{theorem}{Theorem}[section]
\newtheorem{lemma}[theorem]{Lemma}
\newtheorem{prop}[theorem]{Proposition}
\newtheorem{obs}[theorem]{Observation}
\newtheorem{defin}[theorem]{Definition}
\newcommand{\cH}{\mathcal{H}}
\newcommand{\cK}{\mathcal{K}}
\newcommand{\cM}{\mathcal{M}}
\newcommand{\cP}{\mathcal{P}}
\newcommand{\cW}{\mathcal{W}}
\newcommand{\R}{\mathbb{R}}
\newcommand{\Rn}{\R^n}
\newcommand{\Rntk}{(\R^n)^{\otimes k}}
\newcommand{\Rpntk}{(\R_{\geq 0}^n)^{\otimes k}}
\newcommand*{\poly}{\mathrm{poly}}
\newcommand*{\polylog}{\mathrm{polylog}}
\newcommand*{\eps}{\varepsilon}
\DeclareMathOperator*{\argmin}{argmin}
\renewcommand{\leq}{\leqslant}
\renewcommand{\geq}{\geqslant}
\newcommand{\MOTp}{\textnormal{(MOT$'$)}}
\newcommand{\MOTS}{\textnormal{(MOT}_S\textnormal{)}}
\newcommand{\SEP}{\textsf{SEP}}
\newcommand{\Coup}{\cM(\mu_1,\dots,\mu_k)}
\newcommand{\jvec}{\vec{j}}
\newcommand{\lvec}{\vec{\ell}}
\let\baraccent=\= %
\renewcommand{\=}[1]{\stackrel{#1}{=}} %
\providecommand{\RR}{\mathbb{R}}
\providecommand{\cM}{\mathcal{M}}
\providecommand{\cP}{\mathcal{P}}
\providecommand{\eps}{\epsilon}
\providecommand{\NP}{\mathsf{NP}}
\mathchardef\mhyphen="2D %
\providecommand{\sm}{\setminus}
\providecommand{\cH}{\mathcal{H}}
\newcommand{\interior}[1]{%
	{\kern0pt#1}^{\mathrm{o}}%
}
\newenvironment{fminipage}{\begin{Sbox}\begin{minipage}}{\end{minipage}\end{Sbox}\fbox{\TheSbox}}
\def\blfootnote{\gdef\@thefnmark{}\@footnotetext}
\begin{document}
	\title{Wasserstein barycenters can be computed in polynomial time in fixed dimension}
	\author{Jason M. Altschuler \and Enric Boix-Adser\`a}
	\date{\today}
	\maketitle
	
\blfootnote{The authors are with the Laboratory for Information and Decision Systems (LIDS), Massachusetts Institute of Technology, Cambridge MA 02139. Work partially supported by NSF Graduate Research Fellowship 1122374, a TwoSigma PhD Fellowship, and a Siebel PhD Fellowship.}

\begin{abstract}
		Computing Wasserstein barycenters is a fundamental geometric problem with widespread applications in machine learning, statistics, and computer graphics. However, it is unknown whether Wasserstein barycenters can be computed in polynomial time, either exactly or to high precision (i.e., with $\polylog(1/\eps)$ runtime dependence). This paper answers these questions in the affirmative for any fixed dimension. Our approach is to solve an exponential-size linear programming formulation by efficiently implementing the corresponding separation oracle using techniques from computational geometry. 
\end{abstract}

	\section{Introduction}\label{sec:intro}

Given discrete probability distributions $\mu_1, \dots, \mu_k$ supported on $\R^d$ and a vector $\lambda \in \R^k$ of non-negative weights summing to $1$, the corresponding \emph{Wasserstein barycenters} are the probability distributions $\nu$ minimizing
\begin{align}
	\argmin_{\nu} \sum_{i=1}^k \lambda_i \cW(\mu_i,\nu), \label{eq:intro:bary}
\end{align}
where above $\cW(\cdot,\cdot)$ denotes the squared $2$-Wasserstein distance~\citep{AguCar11}. Wasserstein barycenters provide a natural extension of the notion of averaging points to the notion of averaging point clouds. Importantly, they naturally inherit the ability of optimal transportation to capture geometric properties of the data. 
\par This desirable property has led to the widespread use of Wasserstein barycenters in many applications. Applications in statistics and machine learning include for instance the $n$-coupling problem~\citep{ruschendorf2002n}, constrained clustering~\citep{CutDou14,ho2017multilevel}, fusing measurements from partial sensors~\citep{elvander2020multi}, and fusing measurements for scalable Bayesian learning~\citep{SriLiDun18}. Applications in image processing and computer graphics include for instance texture mixing~\citep{RabPeyDel11} and shape interpolation~\citep{solomon2015convolutional}. For further applications, see the surveys~\citep{PeyCut17,panaretos2019statistical}.

\paragraph*{Open problem: computing barycenters in polynomial time.} Despite considerable algorithmic work, it is an open problem (e.g.,~\citep{Bor17}) whether Wasserstein barycenters between discrete distributions can be exactly computed in polynomial time in the input size. A highly related open problem is whether Wasserstein barycenters can be computed to high accuracy, i.e., whether an $\eps$-additively approximate solution for~\eqref{eq:intro:bary} can be computed in time that is polynomial in the input size and $\log(1/\eps)$. This paper answers these questions in the affirmative for any fixed dimension $d$.

\par Previous methods require time that depends polynomially on $1/\eps$ in order to compute $\eps$-approximate barycenters. This means that in practice they can only solve to a few digits of precision. See the prior work section for details. In many applications, including nearly all of those mentioned above, Wasserstein barycenters are used as a subroutine in a larger pipeline to solve downstream data science tasks. Thus, high-precision algorithms are important for downstream performance and to avoid error propagation, especially in applications which require multiple barycenter computations.

\paragraph{Key obstacle.} The well-documented key obstacle is that \emph{a priori}, there are $n^k$ candidate atoms for the barycenter's support, where $n$ is (an upper bound on) the number of atoms in each $\mu_i$.
While there always exists a barycenter with $\poly(n,k)$ atoms, finding which atoms those are requires pruning the $n^k$ exponentially many candidates.

\subsection{Prior work}\label{sec:prev}

The literature on computing Wasserstein barycenters is extensive and rapidly growing.\footnote{We mention in passing that an orthogonal line of work aims to compute barycenters of \emph{continuous} distributions (typically restricted to Gaussian distributions so that both $\mu_i$ and $\nu$ have compact representations for computational purposes). See, e.g.,~\citep{chewi2020gradient,AlvEtAl16}.} Existing algorithms can be partitioned into two categories, depending on how they handle optimizing over the $n^k$ exponentially many candidate atoms for the barycenter support.

\paragraph*{Fixed-support.} Most existing algorithms work around this exponential complexity by making a ``fixed-support approximation'': they assume that the barycenter is supported on a small guessed set of points, and then only optimize over the corresponding masses. This reduces the barycenter problem to a polynomial-size LP, which can then be solved efficiently using out-of-the-box LP solvers, or alternatively using specialized methods such as entropic regularization; see, e.g.,~\citep{CutDou14,BenCarCut15,solomon2015convolutional,CarObeOud15,staib2017parallel,KroDviDvuetal19,janati2020debiased,lin2020fixed} among many others. However, the key issue with fixed-support algorithms is that guessing a reasonable support set for the barycenter requires $\eps$-covering the space. Specifically, these algorithms require fixing the support to roughly $(R/\eps)^d$ points in order to get an $\eps$-additive approximation to the barycenter\footnote{An alternative is to restrict to the union of the supports of $\mu_1,\dots, \mu_k$, which has only $nk$ points. However, this cannot get arbitrarily close approximations~\citep{Bor17}.
},
where $R$ is a bound on the squared diameter of the supports of the input distributions. This results in $\poly(n,k,R/\eps)$ final runtimes in constant dimension $d$.

\par In contrast, our proposed algorithm has $\poly(n,k,\log (R/\eps))$ runtime which, critically, has polylogarithmic dependence on $R/\eps$.\footnote{This means that our algorithm solves the barycenter problem exactly in polynomial time, whereas previous algorithms require \emph{pseudo-polynomial} time. This is because solving the barycenter problem exactly  requires $\eps$ to be exponentially small in the bit-complexity of the input.} In practice, this means that our algorithm can often solve up to machine precision, whereas fixed-support algorithms can only solve up to a few digits of precision---see \S\ref{sec:experiments} for experiments.

\paragraph*{Free-support.} 
Achieving our $\poly(n,k,\log (R/\eps))$ runtime precludes making a fixed-support approximation. Such algorithms are called ``free-support algorithms''. The key obstacle is that, as mentioned before, this requires optimizing over the support set of the barycenters, which \emph{a priori} can only be restricted to $n^k$ candidate points. All previous free-support algorithms either run in exponential time, or are heuristics without provable guarantees; see, e.g.,~\citep{CutDou14,luise2019sinkhorn}.
\par In contrast, we show how to optimize over these $n^k$ candidate points in polynomial time by exploiting the geometric structure of their configuration.

\subsection{Contribution} 
We give the first algorithm that, in any fixed dimension $d$, solves the Wasserstein barycenter problem exactly or to high precision in polynomial time. For simplicity of notation, throughout $d$ is a constant; the running time for fixed $d$ is $(nk)^{d}$ times a polynomial in the input size.

\begin{theorem}[Computing high-precision barycenters]\label{thm:mainbary}
	There is an algorithm that, given $k$ distributions each supported on $n$ atoms in the ball of squared radius $R$ in $\R^d$, a weight vector $\lambda$, and an accuracy $\eps> 0$, computes an $\eps$-additively approximate Wasserstein barycenter in $\poly(n,k,\log(R/\eps))$ time. Moreover, this barycenter has support size at most $nk-k+1$.
\end{theorem}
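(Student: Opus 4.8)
The plan is to reformulate the Wasserstein barycenter problem as a multi-marginal optimal transport (\MOT) linear program, to solve this exponential-size LP through its dual with the ellipsoid (cutting-plane) method, and thereby to concentrate the entire difficulty into implementing a separation oracle for the dual that runs in $\poly(n,k)$ time; we implement this oracle using tools from computational geometry.

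\emph{Reduction to \MOT\ and the support bound.} Write $x_{i,1},\dots,x_{i,n}$ for the atoms of $\mu_i$. It is classical (Agueh--Carlier) that
\[
\min_{\nu}\ \sum_{i=1}^k \lambda_i\cW(\mu_i,\nu)
\;=\;
\min_{\gamma\in\Coup}\ \sum_{\jvec\in[n]^k} \gamma_{\jvec}\, c_{\jvec},
\qquad
c_{\jvec} \defeq \min_{y\in\R^d}\sum_{i=1}^k \lambda_i\norm{x_{i,j_i}-y}^2
= \sum_{i=1}^k \lambda_i\norm{x_{i,j_i}}^2 - \Bigl\|\sum_{i=1}^k \lambda_i x_{i,j_i}\Bigr\|^2 ,
\]
and that, given an optimal coupling $\gamma$, placing mass $\gamma_{\jvec}$ at $\sum_i \lambda_i x_{i,j_i}$ and merging coincident points produces a barycenter. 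This LP has $n^k$ nonnegative variables and $nk$ marginal equality constraints, of which $k-1$ are redundant (for each marginal the $n$ constraints sum to ``total mass $=1$''), so any basic feasible solution is supported on at most $nk-k+1$ tuples; this yields the stated support-size bound. It remains to produce such a solution (to accuracy $\eps$).

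\emph{The separation oracle.} The dual LP is $\max \sum_i \angs{p_i}{\mu_i}$ over $p_1,\dots,p_k\in\R^n$ subject to $\sum_i (p_i)_{j_i}\leq c_{\jvec}$ for all $\jvec$. Given $(p_1,\dots,p_k)$, a separation oracle must decide whether $\max_{\jvec}\bigl[\sum_i(p_i)_{j_i}-c_{\jvec}\bigr]\leq 0$, and otherwise exhibit a violating $\jvec$. Setting $v_{i,j}\defeq\lambda_i x_{i,j}\in\R^d$, $\ b_{i,j}\defeq (p_i)_j-\lambda_i\norm{x_{i,j}}^2\in\R$, and using $\norm{w}^2=\max_{z}(2\angs{z}{w}-\norm{z}^2)$ followed by a max--max swap,
\[
\max_{\jvec}\Bigl[\sum_i(p_i)_{j_i}-c_{\jvec}\Bigr]
= \max_{\jvec}\Bigl[\sum_i b_{i,j_i} + \Bigl\|\sum_i v_{i,j_i}\Bigr\|^2\Bigr]
= \max_{z\in\R^d}\Bigl[-\norm{z}^2 + \sum_{i=1}^k \max_{j\in[n]}\bigl(b_{i,j}+2\angs{z}{v_{i,j}}\bigr)\Bigr].
\]
Crucially, the auxiliary point $z$ decouples the $k$ choices $j_1,\dots,j_k$. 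The inner objective is a concave quadratic plus a sum of $k$ convex piecewise-linear functions; slicing $\R^d$ by the $O(n^2k)$ hyperplanes $\{z: b_{i,j}+2\angs{z}{v_{i,j}}=b_{i,j'}+2\angs{z}{v_{i,j'}}\}$ yields an arrangement with $(n^2k)^{O(d)}$ cells, on each of which every inner maximum is attained by a fixed index, so the restricted objective is a concave quadratic over a polyhedron. We enumerate the cells (standard fixed-dimension arrangement construction), maximize over each by a convex program, and report the best value; it is positive iff $(p_1,\dots,p_k)$ is infeasible, in which case the maximizer $z^\star$ satisfies $z^\star=\sum_i v_{i,j_i^\star}$ for the active indices $\jvec^\star$ at $z^\star$, and $\jvec^\star$ is a violating tuple. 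This runs in $\poly(n,k)$ time for fixed $d$. Plugging it into the ellipsoid method, and recording the polynomially many tuples the oracle returns, yields a polynomial-size restriction of the \MOT\ primal whose optimum is $\eps$-close to optimal and whose optimal vertex (supported on $\leq nk-k+1$ tuples) we compute directly; since the atoms lie in a ball of squared radius $R$ we have $c_{\jvec}\in[0,O(R)]$, so the number of iterations is $\poly(n,k,\log(R/\eps))$. Pushing the coupling forward and merging atoms gives the barycenter (for an \emph{exact} barycenter, take $\eps$ inverse-exponential in the input bit-complexity and round to the nearby vertex).

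\emph{Main obstacle.} Everything hinges on the separation oracle: a priori it is an optimization over $n^k$ tuples of the nonconvex, non-separable objective $\sum_i b_{i,j_i}+\norm{\sum_i v_{i,j_i}}^2$, whose squared-norm term couples all coordinates. The key point is that dualizing the squared norm introduces a single auxiliary point $z\in\R^d$ that, once fixed, decouples the problem; the induced function of $z$ is a difference of convex functions---hence not convex---yet it is piecewise quadratic over a polyhedral subdivision of only polynomial size in fixed dimension, so it can be optimized exactly, cell by cell. The remaining work is computational-geometry bookkeeping: bounding and enumerating the arrangement cells, solving the per-cell convex quadratic programs, and translating the geometric optimizer back into a violating tuple. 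Relative to this, the LP duality, the support-size count, and the cutting-plane complexity analysis are routine.
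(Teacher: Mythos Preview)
Your proposal is correct and follows essentially the same route as the paper: reduce to \MOT, run a cutting-plane method on the dual with a separation oracle, and implement that oracle by introducing an auxiliary point in $\R^d$ (your $z$, the paper's $y$) that decouples the $k$ choices, then enumerate the cells of the induced arrangement. Two small differences worth noting: the paper observes that for each $i$ the relevant bisectors are exactly the facets of a \emph{power diagram} on $n$ spheres, so only $O(nk)$ hyperplanes (not your $O(n^2k)$) are needed, giving an $(nk)^{O(d)}$ rather than $(n^2k)^{O(d)}$ cell count; and rather than running the ellipsoid method to accuracy $\eps$, the paper first rounds the input atoms and weights to $O(\log(Rkd/\eps))$ bits and then solves the resulting rational instance \emph{exactly} (this sidesteps the need to argue that the tuples collected during an approximate ellipsoid run suffice, and directly yields a vertex of the full transportation polytope).
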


\begin{theorem}[Computing exact barycenters]
	If the weight vector and distributions are represented with $\log U$ bits of precision, then an exact barycenter can be found in 
	$\poly(n,k,\log U)$ time. Moreover, this barycenter has support size at most $nk-k+1$.
	\label{thm:exactbary}
\end{theorem}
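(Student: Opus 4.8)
The plan is to reduce the exact barycenter problem to solving a linear program exactly, and to observe that the separation oracle and ellipsoid machinery already used to prove Theorem~\ref{thm:mainbary} supply everything needed once the bit-complexity bookkeeping is done. Recall that the barycenter problem is equivalent to the multi-marginal optimal transport ($\MOT$) linear program: over couplings $\gamma \in \Rp^{n^k}$ of $\mu_1,\dots,\mu_k$, minimize $\sum_{\jvec} c_{\jvec}\,\gamma_{\jvec}$, where $c_{\jvec} = \sum_{i=1}^k \lambda_i \|x_{i,j_i}-b_{\jvec}\|^2$ with $b_{\jvec} = \sum_{i=1}^k \lambda_i x_{i,j_i}$ the $\lambda$-weighted mean of the tuple; a barycenter is then the pushforward of an optimal $\gamma$ under $\jvec\mapsto b_{\jvec}$. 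When the data are given with $\log U$ bits, each $c_{\jvec}$ is a rational of bit-complexity $\poly(k,\log U)$ --- it is a $\poly(k)$-term sum of products of the $\lambda_i$ and of inner products of atoms --- the marginal constraint matrix is $0/1$, and the right-hand side has bit-complexity $\log U$; also $R \le \poly(U)$ since the atoms have $\log U$-bit coordinates. So the $\MOT$ LP has at most $n^k$ variables, $nk$ equality constraints, and facet complexity $\varphi=\poly(k,\log U)$.

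First I would pass to the dual, exactly as in the proof of Theorem~\ref{thm:mainbary}: the dual has only $nk$ variables $p=(p_{i,j})$ and the exponentially many constraints $\sum_{i=1}^k p_{i,j_i}\le c_{\jvec}$, and that proof gives a $\poly(n,k)$-time separation oracle for them (finding a most-violated tuple is the same ``cheapest tuple'' computational-geometry subroutine used there). Feeding this oracle, the facet complexity $\varphi$, and the objective $\sum_{i,j}\mu_i(x_{i,j})p_{i,j}$ into the exact-optimization form of the ellipsoid method (Gr\"otschel--Lov\'asz--Schrijver) produces, in $\poly(n,k,\log U)$ time, the exact optimal value --- hence the exact barycenter cost --- together with an exact optimal dual vertex.

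Next I would recover an exact primal optimal solution, i.e.\ the barycenter itself; this is the only delicate step, since the primal has $n^k$ variables. Here I would use the standard column-generation property of the ellipsoid method: its run makes only polynomially many oracle calls, so the tuples it returns form a polynomial-size set $\cJ$ (augmented with the support of the approximately optimal coupling produced by Theorem~\ref{thm:mainbary}, to guarantee feasibility), and by LP duality the $\MOT$ LP restricted to the columns $\cJ$ has the same optimal value as the full primal. This restricted LP has polynomially many variables and data of bit-complexity $\poly(k,\log U)$, so it can be solved exactly in $\poly(n,k,\log U)$ time, yielding a basic optimal $\gamma^\star$. Since any such restricted LP has the form $\{\gamma\ge 0 : A\gamma=b\}$ with $A$ a marginal-incidence matrix of rank at most $nk-k+1$ --- the $k$ families of marginal constraints share only the single redundancy $\sum_{\jvec}\gamma_{\jvec}=1$, repeated $k$ times --- the support of $\gamma^\star$ has size at most $nk-k+1$, and pushing $\gamma^\star$ forward under $\jvec\mapsto b_{\jvec}$ gives the claimed exact barycenter on at most $nk-k+1$ atoms.

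The step I expect to be the main obstacle is precisely this interaction between exactness and the exponential variable count: extracting the optimal \emph{value} from the dual is routine given the oracle, but arguing that a genuine optimal barycenter --- not merely an approximately optimal one --- with support $\le nk-k+1$ can be produced in polynomial time requires carefully tracking which polynomially many candidate atoms the ellipsoid run touches and certifying that an exact basic solution of the resulting small LP is still globally optimal. Everything rests on the bit-complexity estimates above --- in particular that the costs $c_{\jvec}$ and the LP vertices are rationals with $\poly(k,\log U)$ bits --- which is where the remaining, routine, work lies.
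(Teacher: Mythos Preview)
Your approach is essentially the paper's: reduce to the \eqref{MOT-P} LP, run Ellipsoid on the dual using the power-diagram separation oracle, collect the polynomially many tuples $\cJ$ returned by the oracle, solve the restricted primal on $\cJ$ exactly, and read off sparsity from the rank bound $nk-k+1$. The bit-complexity bookkeeping you outline matches the paper's.

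There is one logical wrinkle worth flagging. You lean on Theorem~\ref{thm:mainbary} twice: once to borrow the separation oracle ``from its proof'', and once to augment $\cJ$ with the support of an approximately optimal coupling ``to guarantee feasibility''. In the paper the dependency runs the other way---Theorem~\ref{thm:mainbary} is derived \emph{from} Theorem~\ref{thm:exactbary} by rounding the input---so as written your argument is circular. This is easily repaired: the oracle (Proposition~\ref{prop:geomoracle}) is developed independently of either theorem, and the feasibility augmentation is unnecessary. Indeed, the determinism-of-Ellipsoid argument you invoke already shows that the restricted dual and the full dual have the same (finite) optimal value; since the restricted dual is always feasible, LP duality forces the restricted primal to be feasible with that same value. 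The paper packages this step slightly differently, via a modified cost $C'$ that agrees with $C$ on $\cJ$ and is prohibitively large elsewhere (Observations~\ref{obs:C':val}--\ref{obs:vertex}), which makes both feasibility and optimality of the restricted primal immediate. Either route works once you drop the appeal to Theorem~\ref{thm:mainbary}.
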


Our algorithm is described in \S\ref{sec:alg}.
Briefly, the high-level idea is as follows. Our starting point is a well-known LP reformulation of the Wasserstein barycenter problem as a Multimarginal Optimal Transport (MOT) problem, recalled in the preliminaries section. This is an exponential-size LP with $n^k$ variables, one for each candidate atom. Nevertheless, we show that a sparse solution can be computed in polynomial time. Our approach consists of two steps. First, by leveraging tools from combinatorial optimization and exploiting the special structure of the MOT LP, we show that this problem can be solved efficiently if one can efficiently implement the separation oracle for the LP dual of MOT. Second, by leveraging tools from computational geometry such as power diagrams and the complexity of hyperplane arrangements, we show how to efficiently implement this separation oracle.

In addition to its polynomial runtime, our algorithm has two additional properties that may be useful in downstream applications. First, the outputted barycenter $\nu$ has small support of $O(nk)$ size, which is much smaller than the \emph{a priori} $n^k$ bound on the support size. In particular, the support size of $\nu$ is at most the maximal sparsity of any vertex of the transportation polytope between $\mu_1, \dots, \mu_k$---which is at most $nk-k+1$.
Note that Theorem~\ref{thm:exactbary} is not at odds with the NP-hardness of finding the sparsest barycenter~\citep{BorPat19}: indeed, our algorithm outputs a solution that albeit sparse is not necessarily the sparsest.
Second, as a by-product, our algorithm also produces sparse solutions to the optimal transport problems $\cW(\mu_i,\nu)$ that are non-mass-splitting maps from $\nu$ to $\mu_i$. Among other benefits, this enables easy visualization and interpretability of the results---in comparison to entropic-regularization based approaches which produce ``blurry'' dense maps.

Although the focus of this work is theoretical, we also provide preliminary numerical experiments in \S\ref{sec:experiments} demonstrating that a slight variant of our algorithm can provide high-precision solutions at previously intractable problem sizes.

Finally, in \S\ref{sec:disc}, we briefly mention that the techniques we develop in this paper extend to solving several related problems. In particular, this gives the first polynomial-time algorithms for computing geometric medians with respect to the $1$-Wasserstein distance (a.k.a. Earth Mover's distance) over any of the popular ground metrics $\ell_1$, $\ell_2$, or $\ell_{\infty}$.

\section{Preliminaries}\label{sec:prelim}

This section is organized as follows. First, in \S\ref{ssec:prelim:not}, we establish our notation, which is mostly standard. Then in \S\ref{ssec:prelim:mot}, \S\ref{ssec:prelim:ellipsoid}, \S\ref{ssec:prelim:cg}, respectively, we recall relevant background from the machine learning, combinatorial optimization, and computational geometry literatures---namely, background about LP formulations of Wasserstein barycenters, algorithms for solving exponential-size LP, and algorithms for manipulating power diagrams.

\subsection{Notation}\label{ssec:prelim:not}

The set $\{1, \dots, n\}$ is denoted by $[n]$. The $k$-fold tensor product space $\Rn \otimes \dots \otimes \Rn$ is denoted by $\Rntk$, and similarly for $\Rpntk$. For shorthand, we often denote a tuple $(j_1, \dots, j_k) \in [n]^k$ by $\jvec$. The $i$-th marginal, $i \in [k]$, of a tensor $P \in (\RR^n)^{\otimes k}$ is denoted by the vector $m_i(P) \in \RR^n$, and has entries 
$[m_i(P)]_{\ell} := \sum_{\jvec \in [n]^k : j_i = \ell} P_{\jvec}$.
The transportation polytope between $\mu_1,\ldots,\mu_k$ is the set of joint distributions with one-dimensional marginal distributions $\mu_1,\ldots,\mu_k$, and is identified with the set $\Coup := \{P \in \Rpntk \,:\, m_i(P) = \mu_i, \, \forall i \in [k]\}$, where we abuse notation slightly by identifying $\mu_i$ with its vector of probabilities (in any order). The closure of a set $E \subset \RR^d$ (with respect to the standard topology) is denoted by $\overline{E}$. Throughout, we assume without loss of generality that each $\lambda_i$ is strictly positive, since otherwise $\mu_i$ does not affect the barycenter (see equation~\eqref{eq:intro:bary}).

\subsection{LP formulation of Wasserstein barycenters}\label{ssec:prelim:mot}

Our starting point is the known fact (see, e.g.,~\citep{AguCar11,BenCarCut15,anderes2016discrete}) that a barycenter $\nu$ can be found by solving the Multimarginal Optimal Transport problem
\begin{align}\min_{P \in \Coup} \langle P, C \rangle,
	\tag{MOT}
	\label{MOT-P}
\end{align}
for the cost tensor $C \in \Rntk$ with entries \begin{align}C_{\jvec} &= \min_{y \in \RR^d} \sum_{i=1}^k \lambda_i \|x_{i,j_i} - y\|^2,
	\label{C-WASS}
\end{align}
or equivalently, $C_{\jvec} = \sum_{i=1}^k \lambda_i \|x_{i,j_i} - \sum_{\ell=1}^k \lambda_{\ell} x_{\ell, j_{\ell}}\|^2$ by optimality of $y = \sum_{\ell=1}^k \lambda_{\ell} x_{\ell,j_{\ell}}$. Specifically, the reduction from the Wasserstein barycenter problem to the LP~\eqref{MOT-P} is as follows.

\begin{lemma}\label{lem:bary-to-mot}
	If $P \in \Coup$ is an optimal solution to~\eqref{MOT-P}, then the pushforward of $P$ under the map $(X_1, \dots, X_k) \mapsto \sum_{i=1}^k \lambda_i X_i$ is an optimal barycenter $\nu$. Furthermore, the support size of $\nu$ is at most the support size of $P$, and also the coupling $(\sum_{i=1}^k \lambda_i X_i,X_j)$ is a non-mass-splitting map that solves the Optimal Transport problem from $\nu$ to $\mu_j$.
\end{lemma}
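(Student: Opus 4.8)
The plan is to sandwich the optimal value of the barycenter problem~\eqref{eq:intro:bary} between two copies of the optimal value of~\eqref{MOT-P} and to read the structural claims off from the tightness of the resulting chain. \emph{For the ``barycenter value $\geq$ \eqref{MOT-P} value'' direction}, take an \emph{arbitrary} candidate $\nu$, fix optimal transport couplings $\pi_i$ between $\nu$ and $\mu_i$ (these exist since these are finite LPs), and glue them along their common marginal $\nu$ --- disintegrate each $\pi_i$ over $\nu$ and form the conditionally independent product, which is elementary since all measures are finitely supported --- to get a joint law of $(Y,X_1,\dots,X_k)$ with $Y\sim\nu$, $(Y,X_i)\sim\pi_i$, hence $X_i\sim\mu_i$. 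If $P$ denotes the law of $(X_1,\dots,X_k)$, then $P\in\Coup$ and, writing $\jvec$ for the (random) index tuple of $(X_1,\dots,X_k)$,
\begin{align*}
\sum_{i=1}^k \lambda_i\,\cW(\mu_i,\nu) \;=\; \E\sum_{i=1}^k\lambda_i\|Y-X_i\|^2 \;\geq\; \E\min_{y\in\R^d}\sum_{i=1}^k\lambda_i\|y-X_i\|^2 \;=\; \E\,C_{\jvec} \;=\; \langle P,C\rangle \;\geq\; \min_{P'\in\Coup}\langle P',C\rangle,
\end{align*}
using optimality of the $\pi_i$ in the first equality and the definition~\eqref{C-WASS} in the third.

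\emph{For the converse direction and optimality of the pushforward}, given an optimal $P\in\Coup$, set $\nu$ to be its pushforward under $T(x_1,\dots,x_k):=\sum_i\lambda_i x_i$, and let $\pi_j$ be the pushforward of $P$ under $(x_1,\dots,x_k)\mapsto(T(x_1,\dots,x_k),x_j)$. By construction $\pi_j$ has first marginal $\nu$, and its second marginal is $m_j(P)=\mu_j$, so $\pi_j$ is a feasible coupling between $\nu$ and $\mu_j$; hence $\cW(\mu_j,\nu)\leq\E_{\jvec\sim P}\|T(x_{\jvec})-x_{j,j_j}\|^2$. Weighting by $\lambda_j$, summing over $j$, and invoking the closed form $C_{\jvec}=\sum_i\lambda_i\|x_{i,j_i}-\sum_\ell\lambda_\ell x_{\ell,j_\ell}\|^2$ recalled just below~\eqref{C-WASS} gives $\sum_j\lambda_j\cW(\mu_j,\nu)\leq\E_{\jvec\sim P}\,C_{\jvec}=\langle P,C\rangle=\min_{P'\in\Coup}\langle P',C\rangle$. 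Chaining with the previous display shows that $\nu$ is an optimal barycenter, that every inequality above is an equality, and in particular that each $\pi_j$ is an optimal coupling for $\cW(\mu_j,\nu)$. The support bound is then immediate: $\nu$ is supported on $T(\supp P)$, the image of a set of size $|\supp P|$.

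\emph{The only nontrivial step is that $\pi_j$ is non-mass-splitting}, i.e.\ that $x_{j,j_j}$ is a function of the value $y=T(x_{\jvec})$ as $\jvec$ ranges over $\supp P$. I would establish the stronger statement: if $\jvec,\lvec\in\supp P$ and $T(x_{\jvec})=T(x_{\lvec})$, then $x_{i,j_i}=x_{i,\ell_i}$ for every $i\in[k]$. Fix $i$. If $j_i=\ell_i$ there is nothing to prove; if $\jvec,\lvec$ agree off coordinate $i$ then $T(x_{\jvec})=T(x_{\lvec})$ already forces $\lambda_i x_{i,j_i}=\lambda_i x_{i,\ell_i}$; otherwise, consider the perturbation moving mass $\eta>0$ off $\jvec$ and $\lvec$ and onto the two tuples obtained by exchanging their $i$-th coordinates. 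One checks directly that this preserves every one-dimensional marginal $m_1,\dots,m_k$ (the inserted and removed mass cancel at each coordinate), so it is a feasible direction, and optimality of $P$ forces the induced change in $\langle P,C\rangle$ to be nonnegative. Evaluating that change with the ``parallel axis'' identity $\sum_i\lambda_i\|x_{i,\ell_i}-z\|^2=C_{\lvec}+\|T(x_{\lvec})-z\|^2$ (valid for every $z\in\R^d$, since $T(x_{\lvec})$ is the $\lambda$-weighted mean of the $x_{i,\ell_i}$), the $\sum_i\lambda_i\|\cdot-z\|^2$ contributions cancel across the four tuples, and choosing $z=y$ the change collapses to $-2\eta\,\lambda_i^2\|x_{i,j_i}-x_{i,\ell_i}\|^2\leq 0$; hence it vanishes and $x_{i,j_i}=x_{i,\ell_i}$. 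Ranging over $i$ gives the claim, so $\pi_j=(\mathrm{id},T_j)_{\#}\nu$ for a genuine map $T_j\colon\supp\nu\to\supp\mu_j$.

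I expect the only real obstacle to be this last step: the first three parts are bookkeeping (gluing/disintegration, a pushforward, and a counting argument), whereas the non-mass-splitting statement genuinely exploits both optimality of $P$ and the special quadratic form of the cost in~\eqref{C-WASS}. Within that step, the points requiring care are choosing a perturbation that fixes all $k$ marginals simultaneously, and dispatching the degenerate configurations (where the swapped tuples coincide with $\jvec$ or $\lvec$) separately so that the perturbation is an admissible direction of the transportation polytope.
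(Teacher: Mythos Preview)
The paper does not actually prove Lemma~\ref{lem:bary-to-mot}: it is stated as a known fact with references to \citep{AguCar11,BenCarCut15,anderes2016discrete}. So there is no ``paper's own proof'' to compare against; your proposal supplies an argument the paper omits.

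Your proof is correct. The sandwich argument for optimality of the pushforward and the support bound are the standard ones and match what one finds in the cited references. One small imprecision: in the lower-bound direction you write that optimal couplings $\pi_i$ ``exist since these are finite LPs,'' but the candidate $\nu$ in~\eqref{eq:intro:bary} is \emph{a priori} arbitrary (not discrete). This is harmless---existence of optimal couplings is classical and the gluing step goes through unchanged, with the resulting law of $(X_1,\dots,X_k)$ automatically finitely supported because each $\mu_i$ is---but the parenthetical should be adjusted.

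The substantive content is your argument for the non-mass-splitting claim, and it is both correct and rather nice. The references above typically derive this from duality or $c$-cyclical monotonicity together with strict convexity of the squared-distance cost; your coordinate-swap perturbation is a more elementary, self-contained route tailored to the discrete setting. I checked the computation: with $y=T(x_{\jvec})=T(x_{\lvec})$, the parallel-axis identity makes the $\sum_m\lambda_m\|x_{m,\cdot}-y\|^2$ terms cancel across the four tuples, and since $T(x_{\jvec'})=y+\lambda_i(x_{i,\ell_i}-x_{i,j_i})$ (similarly for $\lvec'$) the cost change is exactly $-2\eta\lambda_i^2\|x_{i,j_i}-x_{i,\ell_i}\|^2$, forcing equality by optimality. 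Your handling of the degenerate configurations (where $\jvec'$ or $\lvec'$ collapses onto $\jvec$ or $\lvec$) is also complete: each such collapse forces $\jvec$ and $\lvec$ to agree off coordinate $i$, and then $T(x_{\jvec})=T(x_{\lvec})$ already gives $x_{i,j_i}=x_{i,\ell_i}$ directly.
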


Notice that applying this pushforward map $(X_1, \dots, X_k) \mapsto \sum_{i=1}^k \lambda_i X_i$ in order to compute $\nu$ from $P$ requires only $O(skd)$ arithmetic operations, where $s$ denotes the support size of $P$. In particular, this takes polynomial time if $s$ is of polynomial size. Therefore it suffices to compute a sparse solution $P$ of the LP~\eqref{MOT-P}.  

\par Note that the solution $P$ is guaranteed to be sparse if it is a vertex solution. Indeed, since~\eqref{MOT-P} is a standard-form LP whose constraints have rank at most $nk-k+1$, each vertex solution has at most $nk-k+1$ non-zero entries. 

\begin{lemma}\label{lem:vertex-sparse}
	If $P$ is a vertex of the transportation polytope $\Coup$, then $P$ has at most $nk-k+1$ non-zero entries. 
\end{lemma}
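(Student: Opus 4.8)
The plan is to reduce the claim to the textbook fact that every vertex of a standard-form polytope $\{x \ge 0 : Ax = b\}$ is a basic feasible solution, and therefore has at most $\rank(A)$ nonzero coordinates. So the first step is to put $\Coup$ into this form: identify a tensor $P \in \Rpntk$ with a vector of $n^k$ nonnegative reals, and write the marginal constraints $m_i(P) = \mu_i$, $i \in [k]$, as a single linear system $AP = b$. Here $A$ has $nk$ rows, indexed by pairs $(i,\ell) \in [k] \times [n]$; the row $(i,\ell)$ is the $0/1$ indicator of $\{\jvec \in [n]^k : j_i = \ell\}$, and the corresponding entry of $b$ is $\mu_{i,\ell}$.

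The second step is to bound $\rank(A)$ by exhibiting $k-1$ linear dependencies among its rows. For each fixed $i \in [k]$, the $n$ rows $\{(i,\ell)\}_{\ell \in [n]}$ sum — as row vectors in $\R^{n^k}$ — to the all-ones vector $\bone^\top$, because every index $\jvec$ has $j_i = \ell$ for exactly one $\ell \in [n]$. Since this common value $\bone^\top$ does not depend on $i$, for each $i \in \{2,\dots,k\}$ the identity ``(sum of block-$i$ rows) $-$ (sum of block-$1$ rows) $= 0$'' is a nontrivial linear relation among the rows of $A$. These $k-1$ relations involve pairwise-overlapping but genuinely distinct blocks and are easily seen to be linearly independent, so $\rank(A) \le nk - (k-1) = nk - k + 1$.

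The third step is to combine the two: if $P$ is a vertex of $\Coup = \{P \ge 0 : AP = b\}$, then the columns of $A$ indexed by the support $S := \{\jvec : P_{\jvec} > 0\}$ are linearly independent — otherwise a nonzero vector in the kernel of $A$ supported on $S$ could be added to and subtracted from $P$ to exhibit $P$ as the midpoint of two distinct feasible points, contradicting extremality. Hence $|S| \le \rank(A) \le nk - k + 1$, which is the claim.

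I do not expect any real obstacle here: the entire content is the correct accounting of the $k-1$ redundancies among the $nk$ marginal constraints, which is precisely the statement that all $k$ marginals must share the same total mass. One could optionally note that $\rank(A)$ in fact equals $nk-k+1$ whenever every $\mu_i$ has full support, but only the upper bound is needed for the lemma.
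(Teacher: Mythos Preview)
Your proof is correct and matches the paper's approach exactly: the paper does not give a detailed proof but simply observes that \eqref{MOT-P} is a standard-form LP whose constraints have rank at most $nk-k+1$, from which the sparsity of vertex solutions follows. Your proposal fills in precisely those details --- writing the marginal constraints as $AP=b$, exhibiting the $k-1$ redundancies coming from the shared total mass, and invoking the basic-feasible-solution characterization of vertices.
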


\par An obvious obstacle for computing any solution---let alone a sparse solution---of the LP formulation~\eqref{MOT-P} is that it has $n^k$ exponentially many variables. An LP that will be useful to us in the sequel is its dual
\begin{align} \max_{p_1,\ldots,p_k \in \Rn} \sum_{i=1}^k \langle p_i, \mu_i \rangle 
	\quad \text{subject to} \quad 
	C_{\jvec} - \sum_{i=1}^k [p_i]_{j_i} \geq 0, \;\; \forall \jvec \in [n]^k.
	\tag{MOT-D} \label{MOT-D} 
\end{align}
An attractive property of~\eqref{MOT-D} is that it has only $nk$ polynomially many variables. However, of course this alone is not enough to make solving~\eqref{MOT-D} tractable because it has $n^k$ exponentially many constraints. That is, dualizing has transferred the exponential complexity in the number of variables in~\eqref{MOT-P} to the number of constraints in~\eqref{MOT-D}.

\subsection{Algorithms for exponential-size LP}\label{ssec:prelim:ellipsoid}

It is a classical fact (see, e.g.,~\citep{khachiyan1980polynomial,BerTsi97,GLSbook}) that regardless of the number of constraints, an LP with polynomially many variables can be solved in polynomial time so long as there is a polynomial-time implementation of the separation oracle for its feasibility set. Here we recall the technical details of this fact. 

\par First, we recall the definition of a separation oracle. This definition is simply an algorithmic reformulation of the Separating Hyperplane Theorem, which states that for any convex set $\cK \subseteq \R^N$ and any point $p \in \R^N$, exactly one of two alternatives must hold: either $p \in \cK$, or there exists a hyperplane that separates $p$ from $\cK$ (i.e., there exists a vector $h \in \R^N$ and a scalar $g \in \R$ such that $\langle h,p\rangle \geq g$ and $\langle h,x\rangle < g$ for all $x \in \cK$).

\begin{defin}
	A \emph{separation oracle} for a convex set $\cK$ is an algorithm that given a point $p$, either outputs that $p \in \cK$ or outputs a hyperplane that separates $p$ from $\cK$.
\end{defin}

\par Given a polynomial-time implementation of a separation oracle for a polytope, the Ellipsoid algorithm can solve an LP over that polytope in polynomial time. This result can be found in~\citep{GroLovSch81}.

\begin{theorem}\label{thm:generalellipguarantee} Let $\log U$ be an upper bound on the number of bits needed to represent any entry in $A \in \RR^{M \times N}$, $b \in \RR^M$, or $c \in \RR^N$. Then the Ellipsoid algorithm finds a vertex solution to $\argmin \{c^T x : x \in \cP \}$ in $\poly(N,\log U)$ time and $\poly(N,\log U)$ calls to a separation oracle for the polytope $\cP = \{x \in \R^{N} : Ax \leq b\}$.
\end{theorem}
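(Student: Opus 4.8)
The plan is to derive this classical guarantee by combining three ingredients: a priori bit-complexity bounds on vertices and on the optimal value, the volume-shrinking Ellipsoid iteration for feasibility, and a binary-search plus dimension-reduction wrapper that turns feasibility into an exact vertex solution. The underlying quantitative fact I would establish first is that every vertex $v$ of $\cP$ has bit-complexity $\poly(N,\log U)$: by Cramer's rule $v$ solves a square nonsingular subsystem of $Ax=b$, so its entries are ratios of subdeterminants of $[A\,|\,b]$, and the Hadamard bound controls these by $U^{\poly(N)}$. Hence, if $\cP\neq\emptyset$ it lies in the ball $B:=\{x:\norm{x}\le\rho\}$ with $\rho=U^{\poly(N)}$, and the optimal value $\gamma^\star:=\min\{c^Tx:x\in\cP\}$ (when finite) is a rational with $\poly(N,\log U)$-bit numerator and denominator.

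\textbf{Feasibility.} Next I would recall the Ellipsoid iteration: to test whether a polytope $\cQ\subseteq B$ is empty, maintain ellipsoids $E_0\supseteq E_1\supseteq\cdots$ with $E_0=B$ and $E_t\supseteq\cQ$ throughout; query the separation oracle at the center $z_t$ of $E_t$; if it reports $z_t\in\cQ$ we are done, otherwise it returns a separating hyperplane (whose bit-complexity is automatically $\poly(N,\log U)$ since the oracle runs in that much time), and we let $E_{t+1}$ be the minimum-volume ellipsoid containing the corresponding half of $E_t$, rounded slightly outward so its description stays $\poly(N,\log U)$ bits. Since $\mathrm{vol}(E_{t+1})\le e^{-1/(2(N+1))}\mathrm{vol}(E_t)$, after $\poly(N,\log U)$ iterations the volume drops below any prescribed $2^{-\poly(N,\log U)}$ threshold; if $\cQ$ is \emph{known} to be either empty or of volume at least that threshold, this certifies emptiness.

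\textbf{Optimization via binary search.} I would then reduce the LP to feasibility by binary-searching the value $\gamma$. For a candidate $\gamma$, the polytope $\cP_\gamma:=\cP\cap\{x:c^Tx\le\gamma\}$ inherits a $\poly(N,\log U)$-time separation oracle (given $p$, first check $c^Tp\le\gamma$, returning that hyperplane if violated, else call the oracle for $\cP$). Because $\gamma^\star$ has $\poly(N,\log U)$ bit-complexity, $O(\poly(N,\log U))$ bisection steps over a rational grid pin it down exactly. The delicate point is that $\cP_{\gamma^\star}$ is a proper face of $\cP$ and so has zero volume, defeating the emptiness test directly; the standard remedy is to instead test $\cP_{\gamma+\delta}$ for a $\delta=2^{-\poly(N,\log U)}$ small enough that $\cP_{\gamma+\delta}$ is nonempty precisely when $\cP_\gamma$ is, together with the bookkeeping argument that $\cP_{\gamma+\delta}$, when nonempty, contains a ball of radius $2^{-\poly(N,\log U)}$ (one may equivalently perturb $b$). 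Handling these precision and degeneracy issues is where essentially all of the work lies; the geometric content, namely the volume halving, is the easy part.

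\textbf{Extracting a vertex.} Finally, having computed $\gamma^\star$ exactly, I would recover an actual vertex of the optimal face $F:=\cP\cap\{x:c^Tx=\gamma^\star\}$ by dimension reduction: keep a list of equalities satisfied by all of $F$ (initially $c^Tx=\gamma^\star$ plus any equalities cutting out $\mathrm{aff}(\cP)$), and repeatedly pick a coordinate $j$, compute $\min\{x_j:x\in F\}$ exactly via the optimization procedure just described, append the equality $x_j=$ (that value), and pass to the resulting face. Each such value has $\poly(N,\log U)$ bit-complexity and hence is found exactly, and after at most $N$ rounds $\mathrm{aff}(F)$ is a single point---the desired vertex---with only $\poly(N,\log U)$ calls to the separation oracle for $\cP$ in total.
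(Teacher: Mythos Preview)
The paper does not prove this theorem at all: it is stated as background and attributed to \citep{GroLovSch81} (``This result can be found in~\citep{GroLovSch81}''). So there is no proof in the paper to compare against.

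Your sketch is the standard route to this classical result and is broadly sound as an outline. A couple of small remarks. First, your vertex-extraction step minimizes coordinates $x_j$ one at a time; this works on a bounded face, but you should say why the optimal face is bounded (you implicitly assume the minimum is attained, i.e., $\gamma^\star$ is finite, and you already argued vertices lie in a ball of radius $U^{\poly(N)}$, so intersect with that box first). Second, the delicate perturbation/rounding argument you allude to---ensuring that nonempty polytopes under consideration are either empty or contain a ball of radius $2^{-\poly(N,\log U)}$---is exactly where the Gr\"otschel--Lov\'asz--Schrijver machinery (weak vs.\ strong separation/optimization) earns its keep; your sketch correctly flags this as the real work but does not carry it out. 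For the purposes of this paper, which merely invokes the theorem as a black box, your level of detail is already more than the paper provides.
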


\subsection{Computational geometry algorithms}\label{ssec:prelim:cg}

A key ingredient in our barycenter algorithm is power diagrams. Here we introduce these objects and some basic facts about their complexity. Although $d$ is a fixed constant in our final results, we state the explicit dependence on the dimension $d$ in these power diagram complexity bounds to highlight how and where our algorithm incurs exponential runtime dependence in $d$.

\begin{defin}The \emph{power diagram} on the spheres $S(z_1,r_1), \ldots, S(z_n,r_n)$ with centers $z_j \in \RR^d$ and radii $r_j \geq 0$ is the cell complex whose cells $E_1,\ldots,E_n$ are given by $$
	E_j = \{y \in \RR^d \, : \, \|z_j - y\|^2 - r_j^2 < \|z_{j'} - y\|^2 - r_{j'}^2, \, \forall j' \neq j\}.
	$$
\end{defin} 

A power diagram ``essentially'' partitions $\R^d$ in the sense that its cells are disjoint and their closures cover $\R^d$. See Figure~\ref{fig:powerdiagexample} for an illustration. Following are two relevant classical facts. The first essentially shows that a power diagram is defined by a small hyperplane arrangement which can moreover be computed efficiently. 

\begin{lemma}\textbf{\emph{(Theorems 1 and 7 of \citep{aurenhammer1987power}, using convex hull algorithm of \citep{chazelle1993optimal})}}
	A power diagram on $n$ spheres in $\RR^d$ has $O(n)$ affine facets of dimension $d-1$. Moreover these facets can be computed in $O((n \log n + n^{\lceil d/2 \rceil}) \cdot \polylog\; U)$ time, where $\log U$ is the number of bits of precision.\label{lem:powdiagramruntime}
\end{lemma}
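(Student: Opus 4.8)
The plan is to reduce the construction of the power diagram to a single convex hull computation in dimension $d+1$ and then invoke the optimal convex hull algorithm of \citep{chazelle1993optimal}, tracking bit-complexity along the way; this is exactly the route taken by Theorems~1 and 7 of \citep{aurenhammer1987power}.

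First I would record the lifting underlying Theorem~1 of \citep{aurenhammer1987power}. Expanding $\|z_j - y\|^2 - r_j^2 = \|y\|^2 - \langle 2z_j, y\rangle + (\|z_j\|^2 - r_j^2)$ and noting that the term $\|y\|^2$ is common to all indices, the defining inequalities of the cell $E_j$ become $\langle 2z_j, y\rangle - (\|z_j\|^2 - r_j^2) > \langle 2z_{j'}, y\rangle - (\|z_{j'}\|^2 - r_{j'}^2)$ for every $j' \neq j$. Writing $\ell_j(y) := \langle 2z_j, y\rangle - (\|z_j\|^2 - r_j^2)$, the cell $E_j$ is therefore precisely the set of points $y \in \R^d$ at which the affine function $\ell_j$ strictly dominates $\ell_1, \dots, \ell_n$. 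Equivalently, the power diagram is the vertical projection onto $\R^d$ of the boundary of the polyhedron $U := \{(y,t) \in \R^{d+1} : t \le \ell_j(y) \text{ for all } j \in [n]\}$: the closure of $E_j$ is the projection of the facet of $U$ lying on the hyperplane $t = \ell_j(y)$ (which is empty exactly when that inequality is redundant), and each $(d-1)$-dimensional face of the diagram is the projection of a ridge of $U$ and hence lies in a bisector hyperplane $\{y : \ell_j(y) = \ell_{j'}(y)\}$. Since $U$ is an intersection of $n$ halfspaces in $\R^{d+1}$, it has at most $n$ facets, which yields the asserted $O(n)$ facet bound, each non-empty cell of the diagram being the projection of exactly one facet of $U$.

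To compute these facets efficiently, I would pass to the polar dual: the upper envelope of the $n$ hyperplanes $\{t = \ell_j(y)\}$ in $\R^{d+1}$ is combinatorially dual to the convex hull of the $n$ points $\{(2z_j,\ \|z_j\|^2 - r_j^2)\}_{j \in [n]} \subset \R^{d+1}$ read off from the coefficient vectors. Running the convex hull algorithm of \citep{chazelle1993optimal} on these $n$ points in dimension $d+1$ computes their entire face lattice in $O(n\log n + n^{\lfloor (d+1)/2 \rfloor})$ primitive operations; since $\lfloor (d+1)/2 \rfloor = \lceil d/2 \rceil$, dualizing back recovers all facets of $U$---equivalently all cells of the power diagram and their supporting hyperplanes---within the stated operation count. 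It then remains to bound the cost of each primitive operation in bits. Each dual point is obtained from the inputs $z_j$ and $r_j$ by a constant number of additions and multiplications, so all coordinates have bit-length $O(\log U)$ with $d$ constant; the hull algorithm only evaluates signs of $(d+2)\times(d+2)$ orientation determinants in these coordinates, each computable exactly in $\polylog U$ time, and degeneracies can be removed by a standard symbolic perturbation. Multiplying the operation count by this per-operation cost gives the claimed $O\big((n\log n + n^{\lceil d/2\rceil})\cdot \polylog U\big)$ running time. I expect the lifting and the duality to be routine---they are precisely the cited theorems---so the only real care needed is in this last step: confirming that exact $\polylog U$-bit arithmetic suffices throughout Chazelle's algorithm and that perturbing away degeneracies does not inflate the combinatorial bounds.
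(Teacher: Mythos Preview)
The paper does not give its own proof of this lemma; it simply records it as a direct consequence of the cited references. Your write-up is exactly the argument those references contain: Aurenhammer's lifting (Theorem~1) realizing the power diagram as the vertical projection of the boundary of a halfspace intersection $U\subset\R^{d+1}$, the duality with a convex hull of $n$ points in $\R^{d+1}$ (Theorem~7), and Chazelle's optimal hull algorithm to do the computation in $O(n\log n + n^{\lceil d/2\rceil})$ arithmetic operations. So the approach matches.

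One point to be careful about. Your sentence ``Since $U$ is an intersection of $n$ halfspaces in $\R^{d+1}$, it has at most $n$ facets, which yields the asserted $O(n)$ facet bound'' bounds the number of \emph{$d$-dimensional cells} of the diagram (these are projections of facets of $U$), not the number of $(d-1)$-dimensional faces, which are projections of \emph{ridges} of $U$. For $d\geq 3$ the polyhedron $U$ can have $\Theta(n^2)$ ridges (dually, a cyclic polytope on $n$ vertices in $\R^{d+1}$ is neighborly once $d+1\geq 4$), so a power diagram in $\R^d$ need not have only $O(n)$ faces of dimension $d-1$; the correct worst-case bound on total faces is $O(n^{\lceil d/2\rceil})$. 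This does not affect the claimed running time, and in the paper's downstream use (feeding the facet hyperplanes into a hyperplane-arrangement bound) it changes nothing for fixed $d$; but the lemma as literally stated is slightly loose, and your justification of the $O(n)$ count does not actually establish it for $(d-1)$-faces.
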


The second is about hyperplane arrangements. In the sequel this lets us bound the complexity of the ``intersection'' of multiple power diagrams (defined in \S\ref{ssec:alg:step2}).

\begin{lemma}[Theorem 3.3 of \citep{edelsbrunner1986constructing}]
	The cell complex formed by an arrangement of $N$ hyperplanes in $\RR^d$, represented up to $\log U$ bits of precision, can be computed in $ N^d \cdot \polylog(N,U)$ time.  \label{lem:hyperplaneintersectionenumeration}
\end{lemma}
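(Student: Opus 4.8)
The plan is to build the arrangement by the standard \emph{incremental construction}: fix an arbitrary order $h_1,\dots,h_N$ of the hyperplanes and insert them one at a time, maintaining after step $i$ an explicit representation of the cell complex $\cA_i$ induced by $h_1,\dots,h_i$ — say an incidence graph that records, for each face, its defining (in)equalities together with its adjacencies to faces of one lower and one higher dimension. The base case $\cA_0$ is the single cell $\RR^d$. Correctness is immediate from the construction; the entire content of the lemma is the running-time bound.

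When inserting $h_i$ into $\cA_{i-1}$, the faces that must change are precisely those met by $h_i$, which together form the \emph{zone} of $h_i$ in $\cA_{i-1}$. To enumerate them efficiently I would recursively compute the $(d-1)$-dimensional arrangement induced on $h_i$ by the traces $h_1\cap h_i,\dots,h_{i-1}\cap h_i$, and then walk through $\cA_{i-1}$ guided by this lower-dimensional arrangement: each face $f$ of $\cA_{i-1}$ that $h_i$ crosses is replaced by the three faces $f\cap h_i^{-}$, $f\cap h_i$, $f\cap h_i^{+}$, and the incidence graph is repaired locally. The cost of processing $h_i$ is, up to the cost of the geometric predicates used, proportional to the number of faces of its zone.

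The key quantitative input — and the step I expect to be the main obstacle — is the \textbf{zone theorem}: the zone of a hyperplane in an arrangement of $m$ hyperplanes in $\RR^d$ has only $O(m^{d-1})$ faces in total. This is proved by an induction on $d$ that is notoriously delicate (the naive argument of counting zone faces dimension by dimension overcounts, and the first published proofs had to be corrected): one fixes a hyperplane of the zone, projects onto it, and carefully separates the faces of the zone that are genuinely ``new'' from those inherited from the projected subarrangement. Granting the zone theorem, inserting $h_i$ touches $O(i^{d-1})$ faces, so the total number of combinatorial operations — and the size of the final complex $\cA_N$ — is $\sum_{i=1}^{N} O(i^{d-1}) = O(N^d)$, matching the claimed bound.

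Finally I would control the bit complexity so that each combinatorial step costs only $\polylog(N,U)$. Every face of the arrangement is the relative interior of the solution set of a subsystem of the $N$ defining relations, so every vertex is the unique solution of a $d\times d$ linear system with $\log U$-bit entries; by Cramer's rule its coordinates are ratios of $d\times d$ determinants and hence have $O(d\log(dU))=\polylog(U)$ bits (recall $d$ is constant). All predicates used in the walk — which side of $h_i$ a vertex lies on, comparing two intersection points along an edge — are signs of such determinants and are evaluated exactly in $\polylog(N,U)$ time. Multiplying the $O(N^d)$ combinatorial operations by this per-operation cost yields the stated $N^d\cdot\polylog(N,U)$ running time; this is exactly Theorem~3.3 of \citep{edelsbrunner1986constructing}, whose argument follows this outline.
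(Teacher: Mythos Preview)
Your sketch is essentially the standard incremental-construction proof of Edelsbrunner, O'Rourke, and Seidel, which is exactly the reference the paper cites; the paper itself gives no proof of this lemma and simply invokes it as Theorem~3.3 of \citep{edelsbrunner1986constructing}. So there is nothing to compare against in the paper, and your outline (incremental insertion, zone theorem for the $O(m^{d-1})$ bound per insertion, Cramer's rule for the bit-complexity of predicates) matches the cited source's argument.
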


\begin{figure}
	\centering
	\begin{tabular}{cc}
		\ \ \ \ 
		\includegraphics[clip, trim = 3.6cm 1.2cm 3.2cm 1.2cm, scale=0.37  ]{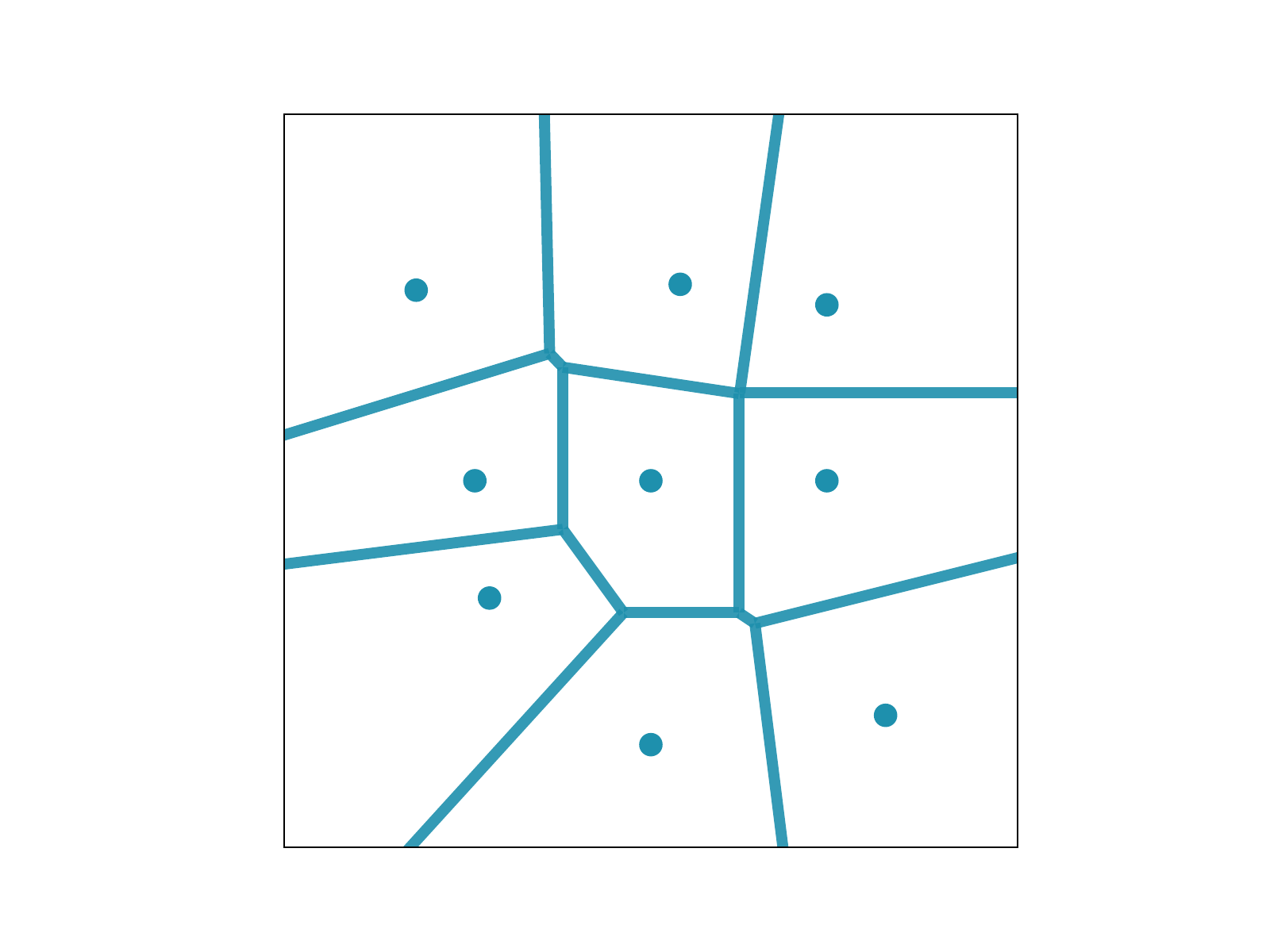}
		\ \ \ \ 
		&
		\ \ \ \ 
		\includegraphics[clip, trim = 3.6cm 1.2cm 3.2cm 1.2cm, scale=0.37]{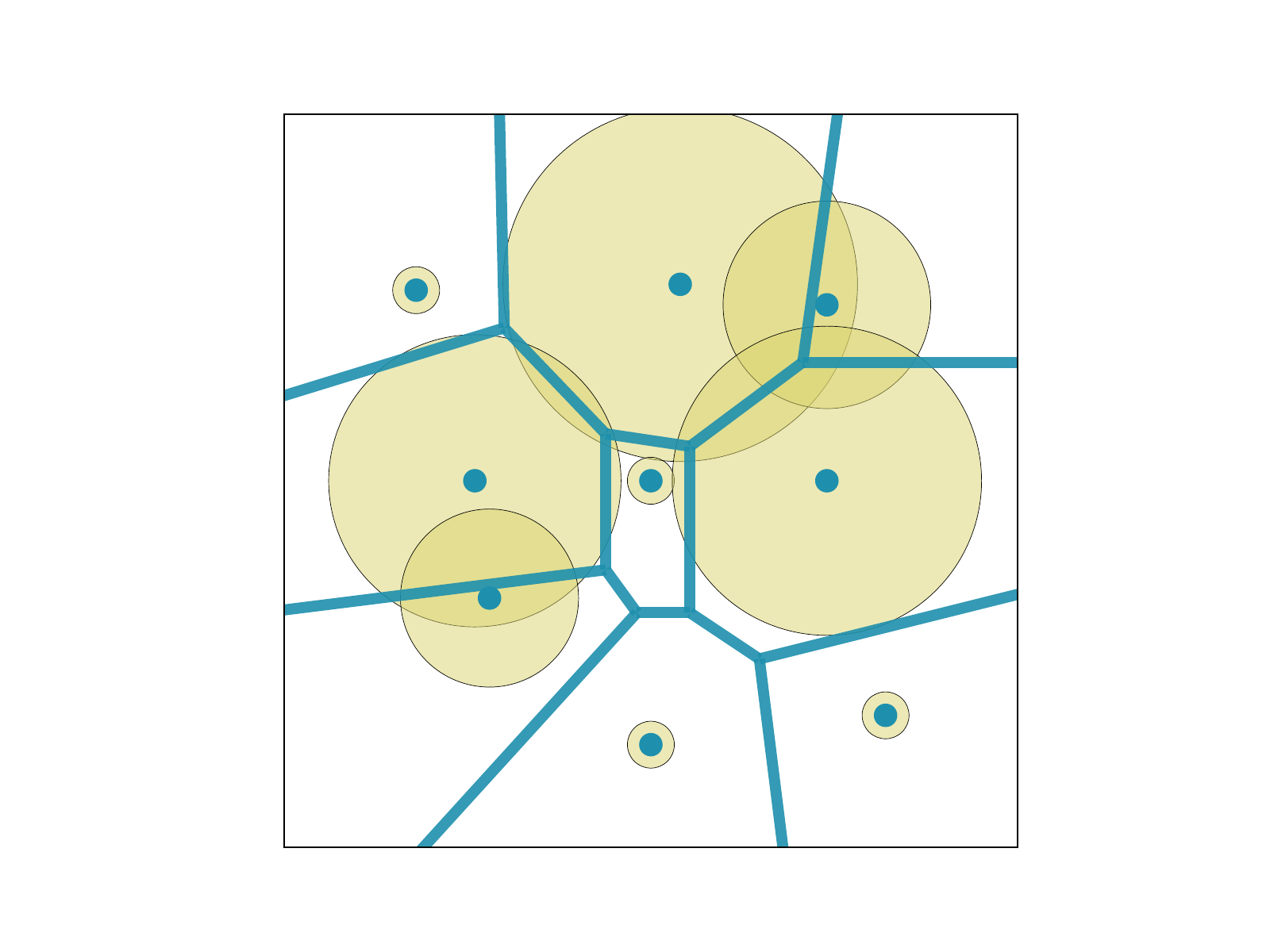}
		\ \ \ \ 
	\end{tabular}
	\caption{Two power diagrams on the same $n = 9$ points with varying weights $w$. Left: all weights are zero (so this is a Voronoi diagram). Right: the weight of a point is indicated by the size of the ball around it. Increasing the weight of a point increases the size of its cell.} \label{fig:powerdiagexample}
\end{figure}

\section{Algorithm}\label{sec:alg}

In this section we describe our algorithm and prove our main results (Theorems~\ref{thm:mainbary} and~\ref{thm:exactbary}). We begin by overviewing the high-level approach. Let us consider the exact solver in Theorem~\ref{thm:exactbary}, as the approximate solver in Theorem~\ref{thm:mainbary} is implemented by exactly solving a rounded problem (see \S\ref{ssec:final} for details).

\par Recall from \S\ref{ssec:prelim:mot} that it suffices to solve the LP formulation~\eqref{MOT-P} of the Wasserstein barycenter problem. However, solving this LP presents a computational obstacle since it has $n^k$ decision variables. Moreover, we desire a sparse solution $P$---rather than a generic solution which has exponentially many non-zero entries---since a polynomial sparsity for $P$ ensures a polynomial support size for the final barycenter $\nu$.

\par The starting point of our approach is recalling the classical fact from \S\ref{ssec:prelim:ellipsoid} that regardless of the number of constraints, an LP with polynomially many variables can be solved in polynomial time so long as the corresponding separation oracle can be implemented in polynomial time. While~\eqref{MOT-P} is not such an LP since it has exponentially many variables, this result applies to its dual~\eqref{MOT-D}. That is, we can efficiently solve~\eqref{MOT-D} so long as we can efficiently implement the corresponding separation oracle.

\par However, two key obstacles remain. First, 
recovering a primal solution is non-trivial in that in general a dual solution does not necessarily ``help'' to find a primal solution~\citep[Exercise 4.17]{BerTsi97}, let alone a sparse primal solution. Second, and most importantly, this approach requires an efficient implementation of the separation oracle for~\eqref{MOT-D}, which does not exist for general Multimarginal Optimal Transport problems (for concrete $\NP$-hard examples see~\citep{AltBoi20hard}). 

\par We solve these issues in two steps:
\begin{enumerate}
	\item \emph{Reduction to separation oracle.} We reduce solving~\eqref{MOT-P} in polynomial time to solving the separation oracle for the dual LP~\eqref{MOT-D} in polynomial time. (Further, we show how to ensure the solution is polynomially sparse.)
	\item \emph{Efficient algorithm for separation oracle.} We use tools from computational geometry to solve the separation oracle for~\eqref{MOT-D} in polynomial time.
\end{enumerate}

At this point, it is worth remarking what special ``structure'' of the exponential-size LP~\eqref{MOT-P} we exploit in order to solve it in polynomial time. Step $1$ does \emph{not} extend to general LP, i.e., one cannot efficiently solve an LP given only an efficient separation oracle for its dual~\citep{GLSbook}. Instead, step $1$ crucially exploits the particular ``structure'' of the feasibility constraints defining~\eqref{MOT-P}, details in \S\ref{ssec:alg:step1}. This extends to arbitrary Multimarginal Optimal Transport problems (i.e., arbitrary costs $C$), and therefore may be of independent interest. However, step $1$ is of course useless unless one can efficiently solve the separation oracle in step $2$. Indeed, as mentioned above, step $2$ does \emph{not} extend to general Multimarginal Optimal Transport problems, i.e., there does not exist an efficient implementation of the the separation oracle for~\eqref{MOT-D} for arbitrary costs $C \in \Rntk$. It is here---in step $2$, not step $1$---that we crucially exploit the remaining ``structure'' in the LP reformulation~\eqref{MOT-P} of the barycenter problem, namely the cost $C$ defined in~\eqref{C-WASS}. Intuitively, this ``structure'' of $C$ is geometric: the $n^k$ entries in $C$ correspond to $n^k$ candidate points for the barycenter's support, and these points must lie in certain constrained geometric configurations, details in \S\ref{ssec:alg:step2}.

\par Let us now elaborate on steps $1$ and $2$. To do this, we first recast the separation oracle for~\eqref{MOT-D} in a convenient way for our algorithmic development.

\begin{defin}
	Given $p = (p_1,\ldots,p_k) \in \R^{n \times k}$, the oracle $\SEP$ returns a tuple $\SEP(p) \in \argmin_{\jvec \in [n]^k} C_{\jvec} - \sum_{i=1}^k [p_i]_{j_i}.$
\end{defin}

The intuition behind $\SEP$ is that it implements\footnote{In fact, it can be shown that $\SEP$ is polynomial-time \emph{equivalent} to the separation oracle for~\eqref{MOT-D}, i.e., each oracle can be implemented using polynomial many calls to the other oracle and polynomial additional computation time. However, this is not needed in the sequel.} the separation oracle for~\eqref{MOT-D} because given the tuple $\jvec = \SEP(p)$, exactly one of the following two alternatives must hold: 
\begin{itemize}
	\item $C_{\jvec} - \sum_{i=1}^k [p_i]_{j_i} \geq 0$, in which case this certifies that $p$ is feasible for~\eqref{MOT-D}.
	\item $C_{\jvec} - \sum_{i=1}^k [p_i]_{j_i} < 0$, in which case this provides a hyperplane that separates $p$ from the feasible set of~\eqref{MOT-D}. 
\end{itemize}

\par Now, in terms of this oracle $\SEP$, steps 1 and 2 are formally summarized as follows. 

\begin{prop}[Step 1]\label{prop:generalmot}
	Let $C \in \Rntk$ be an arbitrary cost, and let $\log U$ be the maximum number of bits of precision in an entry of $C$. A vertex solution $P^*$ for~\eqref{MOT-P} can be found in $\poly(n,k,\log U)$ time and $\poly(n,k,\log U)$ calls to $\SEP$. 
\end{prop}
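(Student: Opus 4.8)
The plan is a column-generation argument: run the Ellipsoid method on the dual~\eqref{MOT-D}, record the polynomially many constraints it ever touches, and then recover a \emph{sparse} primal optimum by solving~\eqref{MOT-P} restricted to those columns. \emph{Step A (solve the dual, harvest columns).} As already observed, $\SEP$ is a separation oracle for the feasible region of~\eqref{MOT-D}: given $p$, the tuple $\jvec=\SEP(p)$ either certifies feasibility (if $C_{\jvec}-\sum_i[p_i]_{j_i}\ge 0$) or exhibits a violated constraint. Since $\Coup\neq\emptyset$ (it contains the product coupling), weak duality makes~\eqref{MOT-D} bounded, and one may restrict its $nk$ variables to a box of radius $\poly(U)$ without loss of generality. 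Theorem~\ref{thm:generalellipguarantee} then computes the optimal value $v^\star$ of~\eqref{MOT-D} in $\poly(n,k,\log U)$ time using $\poly(n,k,\log U)$ calls to $\SEP$. Let $J\subseteq[n]^k$ collect every tuple returned by $\SEP$ over the course of this run, so $|J|=\poly(n,k,\log U)$.

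\emph{Step B (make the restricted primal feasible, then solve it).} A greedy rule --- repeatedly assign to one tuple the largest mass compatible with the residual marginals, which zeroes out at least one residual marginal coordinate, and iterate --- produces in $\poly(n,k)$ time the support $J_0$, with $|J_0|\le nk$, of an explicit point of $\Coup$. Enlarge $J$ to $J\cup J_0$; this keeps $|J|=\poly(n,k,\log U)$ and makes the \emph{restricted primal} --- the LP $\min\{\langle P,C\rangle : P\in\Rpntk,\ m_i(P)=\mu_i\ \forall i,\ P_{\jvec}=0\ \forall\jvec\notin J\}$ --- feasible. It has $|J|$ variables, $nk$ equality constraints, and $O(\log U)$-bit data, so any polynomial-time LP algorithm that returns a vertex outputs a vertex solution $P^\star$ in $\poly(n,k,\log U)$ time with no further $\SEP$ calls. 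The feasible region of the restricted primal is the face $\{P\in\Coup : P_{\jvec}=0\ \forall\jvec\notin J\}$ of $\Coup$ (turn the facet inequalities $P_{\jvec}\ge 0$, $\jvec\notin J$, into equalities), so $P^\star$ is a vertex of $\Coup$, and by Lemma~\ref{lem:vertex-sparse} it has at most $nk-k+1$ nonzeros.

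\emph{Step C (optimality of $P^\star$ for the full LP).} Feasibility of $P^\star$ for~\eqref{MOT-P} already gives $\langle P^\star,C\rangle\ge v^\star$, using strong duality (the optimum of~\eqref{MOT-P} equals $v^\star$) for the full pair. For the reverse inequality: the LP dual of the restricted primal is precisely~\eqref{MOT-D} with only the constraints indexed by $J$ retained, and both LPs of this restricted pair are feasible, so $\langle P^\star,C\rangle$ equals the optimum of the $J$-restricted dual. Since that is a \emph{relaxation} of~\eqref{MOT-D}, its optimum is $\ge v^\star$; the argument closes if and only if this optimum is \emph{exactly} $v^\star$, i.e.\ if and only if the columns $J$ harvested in Step A already certify the dual optimum.

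\emph{The main obstacle} is exactly this last point, which is the standard principle behind the equivalence of separation and optimization. When the Ellipsoid method concludes that $\{p\in\mathrm{box} : C_{\jvec}-\sum_i[p_i]_{j_i}\ge 0\ \forall\jvec,\ \langle p,\mu\rangle\ge\gamma\}$ is infeasible for $\gamma$ slightly above $v^\star$, it reaches this conclusion using only the separating hyperplanes it received, all of which lie in $J$; hence the system with constraints restricted to $J$ (together with $\langle p,\mu\rangle\ge\gamma$) is already infeasible, so the $J$-restricted dual has optimum $<\gamma$, and letting $\gamma\downarrow v^\star$ --- all relevant optima being rationals of bounded bit-complexity --- forces it to equal $v^\star$. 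Equivalently, one invokes the well-known fact (\citep{GroLovSch81,GLSbook}) that an Ellipsoid run implicitly returns a polynomial-size subset of constraints on which the LP retains its value. I expect carefully extracting such a constraint set from the guarantee of Theorem~\ref{thm:generalellipguarantee}, and tracking the precision of the binary search over $\gamma$, to be the one genuinely technical point; the remainder is routine LP duality together with the observation that a vertex of a face of $\Coup$ is a vertex of $\Coup$.
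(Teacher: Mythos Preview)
Your proposal is correct and follows the same column-generation scheme as the paper: run Ellipsoid on~\eqref{MOT-D}, collect the tuples $S$ returned by $\SEP$, solve the primal restricted to those columns, and observe that a vertex of the restricted feasible set (a face of $\Coup$) is a vertex of $\Coup$. The one substantive difference is in how you establish that the restricted primal has the correct optimal value. You invoke the Ellipsoid infeasibility-certificate principle and then must track the binary-search precision over $\gamma$ (your acknowledged ``main obstacle''). The paper instead introduces a modified cost $C'$ equal to $C$ on $S$ and to $2U$ elsewhere, and observes that since Ellipsoid is deterministic and accesses $C$ only through $\SEP$, running it on $C'$ would produce \emph{identical} queries and answers; hence~\eqref{MOT-D} has the same value under $C$ and $C'$, and by strong duality so does~\eqref{MOT-P}. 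Any optimum of the modified primal must then be supported on $S$ (otherwise $\langle P,C\rangle<\langle P,C'\rangle$ would contradict the equality of values), which also delivers feasibility of the restricted primal for free --- your explicit greedy padding by $J_0$ is unnecessary. Both arguments are valid; the paper's $C'$ trick is cleaner precisely because it sidesteps the precision bookkeeping you flagged.
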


\begin{prop}[Step 2]\label{prop:geomoracle}
	If the cost $C$ is given by \eqref{C-WASS}, the oracle $\SEP(p)$ can be implemented in $\poly(n,k,\log U)$ time, where $\log U$ is the number of bits of precision needed to represent the points $x_{i,j} \in \RR^d$, weights $\lambda_i \in \RR_{> 0}^k$ and potentials $p \in \R^{n \times k}$.
\end{prop}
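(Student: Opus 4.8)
The plan is to exploit the geometric structure of the cost~\eqref{C-WASS} to recast the discrete minimization over the $n^k$ tuples $\jvec$ defining $\SEP(p)$ as a \emph{continuous} minimization over $y\in\R^d$ of a piecewise‑quadratic function whose pieces are indexed by only polynomially many tuples $\jvec$, and then to enumerate those pieces in polynomial time (for fixed $d$) using power diagrams and hyperplane‑arrangement complexity. The first move (Step A) is to substitute $C_{\jvec}=\min_{y}\sum_i\lambda_i\|x_{i,j_i}-y\|^2$ and interchange the finite minimizations:
\begin{align*}
\min_{\jvec\in[n]^k}\Big(C_{\jvec}-\sum_{i=1}^k[p_i]_{j_i}\Big)
&=\min_{y\in\R^d}\ \min_{\jvec\in[n]^k}\ \sum_{i=1}^k\Big(\lambda_i\|x_{i,j_i}-y\|^2-[p_i]_{j_i}\Big)\\
&=\min_{y\in\R^d}\ \sum_{i=1}^k f_i(y),\qquad f_i(y):=\min_{j\in[n]}\Big(\lambda_i\|x_{i,j}-y\|^2-[p_i]_j\Big).
\end{align*}
The point is that the inner minimization over $\jvec$ decouples across coordinates, since the $i$‑th summand depends on $\jvec$ only through $j_i$.

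Next (Step B), for each fixed $i$ the $n$ functions $y\mapsto\lambda_i\|x_{i,j}-y\|^2-[p_i]_j$ all share the same quadratic part $\lambda_i\|y\|^2$, so the region of $\R^d$ on which index $j$ attains the minimum in $f_i$ is exactly the cell $E_{i,j}$ of the power diagram $\cD_i$ on the spheres $S(x_{i,j},r_{i,j})$ with $r_{i,j}^2:=[p_i]_j/\lambda_i$ (here $\lambda_i>0$ by assumption, and since adding a common constant to all of $p_i$ leaves the cell structure unchanged we may assume $r_{i,j}^2\geq0$). Thus $f_i$ coincides with the single quadratic $\lambda_i\|x_{i,j}-y\|^2-[p_i]_j$ on $E_{i,j}$, and hence $g:=\sum_i f_i$ coincides on each cell $F$ of the common refinement of $\cD_1,\dots,\cD_k$ with a single quadratic $q_{\jvec(F)}(y):=\sum_i\lambda_i\|x_{i,j_i(F)}-y\|^2-\sum_i[p_i]_{j_i(F)}$, for a tuple $\jvec(F)$ determined by that cell.

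For Step C, Lemma~\ref{lem:powdiagramruntime} gives that each $\cD_i$ is bounded by $O(n)$ hyperplanes, computable in polynomial time; hence the common refinement is refined by the arrangement $\cA$ of these $O(nk)$ hyperplanes, which by Lemma~\ref{lem:hyperplaneintersectionenumeration} has $O((nk)^d)$ cells and is computable in $(nk)^d\cdot\polylog(nk,U)$ time. The algorithm then computes $\cA$; for each full‑dimensional cell $F$ it extracts a representative interior point of polynomially bounded bit‑complexity and reads off $\jvec(F)$ by locating that point within each $\cD_i$ (evaluate the $n$ quadratics of $f_i$ and take the argmin); and it outputs whichever of these polynomially many tuples minimizes $C_{\jvec}-\sum_i[p_i]_{j_i}$, using the closed form $C_{\jvec}=\sum_i\lambda_i\|x_{i,j_i}-\sum_\ell\lambda_\ell x_{\ell,j_\ell}\|^2$. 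Correctness follows because $g=\min_{\jvec}q_{\jvec}$ is continuous and coercive, so it attains its minimum at some $y^*\in\overline F$ for a full‑dimensional cell $F$ of $\cA$, and then $\min_{\jvec}(C_{\jvec}-\sum_i[p_i]_{j_i})=g(y^*)=q_{\jvec(F)}(y^*)\geq\min_y q_{\jvec(F)}(y)=C_{\jvec(F)}-\sum_i[p_i]_{j_i(F)}\geq\min_{\jvec}(C_{\jvec}-\sum_i[p_i]_{j_i})$, forcing equality throughout and showing $\jvec(F)$ is a valid output of $\SEP(p)$.

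I expect the substantive work to be bookkeeping rather than a deep idea. The one place that needs genuine care is the claim that $g$ is a single quadratic on each full‑dimensional cell of $\cA$: this requires that the $O(n)$ facet‑hyperplanes furnished by Lemma~\ref{lem:powdiagramruntime} actually contain \emph{all} cell boundaries of $\cD_i$ (so that a cell of $\cA$ cannot straddle two power cells), not merely some of the pairwise bisectors. Beyond that, one must justify the harmless shift making the power‑diagram radii real, and propagate $\log U$‑bit complexity bounds through the arrangement construction, the extraction of representative points, and the quadratic evaluations, so that every quantity stays polynomial‑size and the overall running time is $\poly(n,k,\log U)$ for fixed $d$.
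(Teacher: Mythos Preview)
Your proposal is correct and follows essentially the same approach as the paper: swap the order of minimization to get $\min_y \sum_i f_i(y)$, recognize each $f_i$ as piecewise quadratic with pieces given by a power diagram, refine the $k$ power diagrams via the arrangement of their $O(nk)$ facet hyperplanes (Lemmas~\ref{lem:powdiagramruntime} and~\ref{lem:hyperplaneintersectionenumeration}), and then enumerate the resulting cells, read off the tuple $\jvec$ for each, and evaluate the closed-form objective. Your correctness chain of equalities/inequalities is exactly the content of the paper's Lemma~\ref{lem:sep-correct}, and your enumeration step is Lemma~\ref{lem:sep-runtime}; the shift to make the radii nonnegative and the concern about facet hyperplanes containing all cell boundaries are likewise handled the same way in the paper.
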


The remainder of the section is organized as follows. In \S\ref{ssec:alg:step1} and \S\ref{ssec:alg:step2}, we detail the algorithms in steps $1$ and $2$, respectively, and prove Propositions~\ref{prop:generalmot} and~\ref{prop:geomoracle}. Combining Propositions~\ref{prop:generalmot} and~\ref{prop:geomoracle} then proves Theorem \ref{thm:mainbary} aside from checking bit-complexity details, which is done formally in \S\ref{ssec:final}.

\subsection{Step 1: reduction to separation oracle}\label{ssec:alg:step1}

Here we prove Proposition~\ref{prop:generalmot} and describe the algorithm in it. This algorithm has two steps. First, it identifies a small set $S \subseteq [n]^k$ on which an optimal MOT solution is supported. Second, it optimizes over distributions supported on $S$.

\begin{proof}[Proof of Proposition~\ref{prop:generalmot}]
	First, we construct a set $S \subset [n]^k$ as follows. Since $\SEP(p)$ implements a separation oracle for \eqref{MOT-D}, and since~\eqref{MOT-D} has $N = nk$ variables, Theorem~\ref{thm:generalellipguarantee} implies that an optimal solution for \eqref{MOT-D} can be found by the Ellipsoid algorithm in $\poly(N,\log U) = \poly(n,k,\log U)$ time. Let $L$ denote the number of $\SEP$ queries made by the Ellipsoid algorithm. For $l \in [L]$, let $p^{(l)} \in \R^{n \times k}$ be the argument of the $l$-th query to $\SEP$, and let $\jvec^{(l)} \in [n]^k$ be the returned tuple. Let $S := \{\jvec^{(\ell)}\}_{\ell=1}^L$ denote the set of all returned tuples. 
	\par Next, we show how to compute an optimal vertex solution for~\eqref{MOT-P} using $S$. Let $\cM_S(\mu_1, \dots, \mu_k) = \{P \in \Coup : P_{\jvec} = 0\; \forall \jvec \notin S \}$ be the set of distributions in the transportation polytope supported on $S$, and let $\MOTS$ be~\eqref{MOT-P} with the decision set $\Coup$ replaced by $\cM_S(\mu_1, \dots, \mu_k)$. The key lemma is that it suffices to compute an optimal vertex solution for $\MOTS$. This is proved below in Lemma~\ref{lem:step-1}; let us presently show how to use it complete the proof of the main proposition. Note that since the number of Ellipsoid iterations is $\poly(n,k,\log U)$ by Theorem~\ref{thm:generalellipguarantee}, the set $S$ has cardinality of size $\poly(n,k,\log U)$. Thus $\MOTS$ is a polynomial size LP, so we can compute a vertex solution for it in polynomial time with standard LP solvers (e.g., Theorem~\ref{thm:generalellipguarantee}).
\end{proof}

We now state the key lemma used in the above proof.
\begin{lemma}\label{lem:step-1}
	Any vertex solution $P$ for $\MOTS$ is also a vertex solution for~\eqref{MOT-P}. 
\end{lemma}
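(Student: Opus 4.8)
The plan is to show that the optimal value of $\MOTS$ equals the optimal value of \eqref{MOT-P}, and then argue that vertices of $\cM_S(\mu_1,\dots,\mu_k)$ are vertices of $\Coup$. For the first part, the key is LP duality: the set $S$ was constructed precisely as the set of tuples $\jvec$ returned by $\SEP$ during the Ellipsoid run on \eqref{MOT-D}, and these are exactly the constraints of \eqref{MOT-D} that the Ellipsoid algorithm ``saw''. So I would first observe that the potentials $(p_1,\dots,p_k)$ produced by the Ellipsoid algorithm are optimal not only for \eqref{MOT-D} but also for the relaxed dual in which only the constraints indexed by $\jvec \in S$ are kept. Indeed, the Ellipsoid algorithm's correctness guarantee (Theorem~\ref{thm:generalellipguarantee}) only uses the separating hyperplanes it was handed, so its output is simultaneously optimal for the full dual and for the dual restricted to the constraints in $S$. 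Now this $S$-restricted dual is exactly the LP dual of $\MOTS$: the variables of $\MOTS$ are the $P_{\jvec}$ for $\jvec\in S$, its equality constraints are the marginal constraints $m_i(P)=\mu_i$, and dualizing gives $\max \sum_i \langle p_i,\mu_i\rangle$ subject to $C_{\jvec} - \sum_i [p_i]_{j_i}\geq 0$ for $\jvec \in S$ only. Hence by strong LP duality, $\mathrm{OPT}(\MOTS) = \mathrm{OPT}(\text{$S$-restricted dual}) = \mathrm{OPT}(\eqref{MOT-D}) = \mathrm{OPT}(\eqref{MOT-P})$, the last equality again by strong duality between the (exponential-size) primal and dual. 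I should also check that $\MOTS$ is feasible so that strong duality applies and the dual is not unbounded — feasibility of $\MOTS$ follows because the Ellipsoid algorithm returns an optimal (hence finite, attained) dual solution, which by LP duality forces the restricted primal to be feasible and bounded; alternatively one notes $S$ must be rich enough that $\cM_S$ is nonempty, and this can be ensured by, e.g., adding the product tuples or a greedy North-West-corner solution's support to $S$ at the outset if needed.

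Given that the optimal values coincide, take any vertex solution $P$ of $\MOTS$. It is a feasible point of $\Coup$ (its marginals are $\mu_1,\dots,\mu_k$, and it is nonnegative), and its objective value $\langle P,C\rangle = \mathrm{OPT}(\MOTS) = \mathrm{OPT}(\eqref{MOT-P})$, so $P$ is an \emph{optimal} solution of \eqref{MOT-P}. It remains to upgrade ``optimal'' to ``vertex''. For this I would use the standard characterization: $P$ is a vertex of $\Coup$ iff the columns of the constraint matrix indexed by the support of $P$ are linearly independent. But $\cM_S(\mu_1,\dots,\mu_k)$ is exactly the face (equivalently, the slice) of $\Coup$ obtained by intersecting with the coordinate subspace $\{P_{\jvec}=0 : \jvec\notin S\}$; it is itself a transportation-type polytope whose constraint matrix is the submatrix of the $\Coup$-constraint matrix obtained by deleting the columns outside $S$. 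A vertex of $\MOTS$ therefore has support on which \emph{those} columns are linearly independent — and these are literally columns of the original constraint matrix of $\Coup$ — so the same support columns are linearly independent as columns of the $\Coup$ constraint matrix. By the vertex characterization, $P$ is a vertex of $\Coup$. (Equivalently and more simply: if $P$ were not a vertex of $\Coup$, write $P = \frac12(Q_1+Q_2)$ with $Q_1\neq Q_2$ in $\Coup$; since $P_{\jvec}=0$ for $\jvec\notin S$ and $Q_1,Q_2\geq 0$, also $[Q_1]_{\jvec}=[Q_2]_{\jvec}=0$ off $S$, so $Q_1,Q_2\in\cM_S$, contradicting that $P$ is a vertex of $\MOTS$.)

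The main obstacle I anticipate is making the duality step fully rigorous, in particular the claim that the Ellipsoid output is optimal for the $S$-restricted dual. The subtlety is that $S$ is defined \emph{a posteriori} from the run of the algorithm, so one must argue carefully that replacing \eqref{MOT-D} by its $S$-restriction does not change the trajectory of the algorithm (it cannot, since every query is answered identically) and does not change the returned point's certified optimality (the optimality certificate the Ellipsoid method produces is built only from the hyperplanes in $S$). Once that is pinned down, everything else is the routine LP facts above. I would structure the written proof as: (i) $S$-restricted dual $=$ dual of $\MOTS$ by inspection; (ii) Ellipsoid output is optimal for the $S$-restricted dual, hence $\mathrm{OPT}(\MOTS)=\mathrm{OPT}(\eqref{MOT-P})$; (iii) a vertex of $\MOTS$ is optimal and feasible for \eqref{MOT-P}; (iv) the convex-combination argument shows it is in fact a vertex of $\Coup$.
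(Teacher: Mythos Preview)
Your proof is correct and shares the same two-part skeleton as the paper's: first establish $\mathrm{OPT}(\MOTS)=\mathrm{OPT}(\eqref{MOT-P})$, then show that vertices of $\cM_S(\mu_1,\dots,\mu_k)$ are vertices of $\Coup$. The vertex step (your convex-combination argument) is exactly the paper's Observation~\ref{obs:vertex}. The difference lies in how the value equality is obtained. You argue directly on the dual side: the Ellipsoid run, viewed as a run against the $S$-restricted dual (which is precisely the dual of $\MOTS$), certifies optimality there as well, so the two dual values coincide and strong duality finishes. The paper instead works on the primal side via an auxiliary cost $C'$ that agrees with $C$ on $S$ and is prohibitively large off $S$: it shows $\mathrm{OPT}(\eqref{MOT-P})=\mathrm{OPT}(\MOTp)$ by the same ``Ellipsoid only sees constraints in $S$'' determinism argument you invoke, and then $\mathrm{OPT}(\MOTp)=\mathrm{OPT}(\MOTS)$ because any $\MOTp$-optimal solution must be supported on $S$. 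Your route is arguably cleaner in that it avoids the auxiliary cost and the extra primal problem, going straight through the restricted dual; the paper's route has the minor advantage that the subtlety you flag (that the Ellipsoid output is simultaneously optimal for the restricted problem) is slightly more transparent when phrased as ``the SEP answers are consistent with cost $C'$,'' since one is then literally re-running the same deterministic algorithm on a different input rather than reinterpreting an oracle. Either way the crux is identical, and your handling of the feasibility of $\MOTS$ via boundedness of the restricted dual is fine (the North-West-corner fallback is unnecessary).
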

Lemma~\ref{lem:step-1} follows directly from the following three observations. Below, let $C'$ denote the tensor that agrees with $C$ on $S$, and equals $2U$ elsewhere. Also let $\MOTp$ denote the problem~\eqref{MOT-P} where the cost $C$ is replaced by $C'$. 

\begin{obs}\label{obs:C':val}
	The optimal values of~\eqref{MOT-P} and $\MOTp$ are equal.
\end{obs}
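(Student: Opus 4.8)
The plan is to prove Observation~\ref{obs:C':val} by a simple two-sided inequality argument. The only difference between~\eqref{MOT-P} and $\MOTp$ is the cost tensor: $C'$ agrees with $C$ on $S$ and equals $2U$ on $[n]^k \setminus S$. Since $C' \geq C$ entrywise (recall every entry of $C$ is bounded in absolute value by $U$ because it has $\log U$ bits of precision, hence $C_{\jvec} \le U < 2U$), and since the feasible set $\Coup$ is the same and consists of nonnegative tensors, we immediately get $\langle P, C'\rangle \ge \langle P, C\rangle$ for every $P \in \Coup$, so $\mathrm{OPT}(\MOTp) \ge \mathrm{OPT}(\eqref{MOT-P})$.

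For the reverse inequality, the key point is that $S$ contains the support of an optimal solution to~\eqref{MOT-P}. First I would recall that Proposition~\ref{prop:generalmot}'s construction runs the Ellipsoid algorithm on the dual~\eqref{MOT-D}, and the returned tuples $\jvec^{(1)},\dots,\jvec^{(L)}$ collected into $S$ include, in particular, the tuples needed to certify optimality. More concretely: once the Ellipsoid algorithm terminates with an optimal dual solution $p^* = (p_1^*,\dots,p_k^*)$, complementary slackness guarantees that any optimal primal solution $P^*$ of~\eqref{MOT-P} is supported only on tuples $\jvec$ that are tight, i.e. $C_{\jvec} - \sum_i [p_i^*]_{j_i} = 0$; and $\SEP(p^*)$ returns a minimizer of $C_{\jvec} - \sum_i [p_i^*]_{j_i}$, which for an optimal $p^*$ has value $0$. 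The cleanest route is to observe that we can take $P^*$ to be supported on $\argmin_{\jvec} (C_{\jvec} - \sum_i [p_i^*]_{j_i})$, and then argue this argmin set is contained in $S$ by the logic of how the Ellipsoid run certifies feasibility/optimality of $p^*$. Given such a $P^* \in \Coup$ supported on $S$, we have $\langle P^*, C'\rangle = \langle P^*, C\rangle = \mathrm{OPT}(\eqref{MOT-P})$ because $C' = C$ on $S \supseteq \supp(P^*)$, and since $P^*$ is feasible for $\MOTp$ this yields $\mathrm{OPT}(\MOTp) \le \mathrm{OPT}(\eqref{MOT-P})$. Combining the two inequalities gives equality.

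I expect the main obstacle to be pinning down precisely why $S$ contains the support of some optimal primal solution—that is, making rigorous the claim that the tuples queried by the Ellipsoid algorithm suffice to reconstruct a primal optimum. Naively, the Ellipsoid algorithm only outputs a dual optimum and a transcript of queries, and it is not automatic that this transcript "sees" an optimal primal support. The fix (which I would expect the paper to use) is that it is enough for $S$ to contain the tuples that are tight for the final dual solution $p^*$: since the separation oracle queried at $p^*$ (or a nearby certifying point) returns a minimizing $\jvec$, and since the LP over $\cM_S(\mu_1,\dots,\mu_k)$ inherits the same dual certificate restricted to $S$, any optimal primal supported on the $S$-tight tuples is globally optimal. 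Alternatively, and perhaps more simply for this particular observation, one can sidestep primal reconstruction entirely: $\MOTp$ has the same dual as~\eqref{MOT-P} except that the exponentially many constraints outside $S$ are relaxed to $C'_{\jvec} - \sum_i [p_i]_{j_i} \ge 0$ with $C'_{\jvec} = 2U$, which are satisfied by $p^*$ (each $[p_i^*]_{j_i}$ is bounded, so $\sum_i [p_i^*]_{j_i} \le 2U$ after noting the dual optimum has bounded entries), so $p^*$ remains dual-feasible for $\MOTp$ with the same objective value $\mathrm{OPT}(\eqref{MOT-P})$; weak duality for $\MOTp$ then gives $\mathrm{OPT}(\MOTp) \ge \mathrm{OPT}(\eqref{MOT-P})$—wait, that is the wrong direction, so the primal-support argument above is the one to use.

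In summary, the proof is: (i) $C' \ge C$ entrywise, giving $\mathrm{OPT}(\MOTp) \ge \mathrm{OPT}(\eqref{MOT-P})$; (ii) $S$ contains the support of an optimal solution $P^*$ to~\eqref{MOT-P}, so $P^*$ is feasible for $\MOTp$ with $\langle P^*, C'\rangle = \langle P^*, C\rangle$, giving the reverse inequality. The delicate step is (ii), and I would handle it by appealing to complementary slackness against the dual optimum $p^*$ produced by the Ellipsoid algorithm together with the fact that $\SEP(p^*)$-type queries ensure the tight tuples for $p^*$ lie in $S$.
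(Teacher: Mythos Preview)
Your step (i) is fine, but step (ii) has a real gap. You want to show that $S$ contains the support of some optimal primal $P^*$, and you propose to argue this via complementary slackness against the dual optimum $p^*$: any optimal $P^*$ is supported on the set of \emph{tight} tuples $\{\jvec : C_{\jvec} = \sum_i [p_i^*]_{j_i}\}$, and then you assert that ``$\SEP(p^*)$-type queries ensure the tight tuples for $p^*$ lie in $S$.'' This last assertion is not justified. The $\SEP$ oracle returns a \emph{single} minimizing tuple per query, whereas the set of tight tuples for $p^*$ can be large. Even if the Ellipsoid algorithm happened to query exactly $p^*$ (which it need not), $\SEP(p^*)$ would return just one tight tuple; there is no mechanism in your argument that forces $S$ to contain \emph{enough} tight tuples to support a feasible element of $\Coup$. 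So the claim ``$S$ contains the support of some optimal primal'' is precisely what needs proof, and your justification is circular.

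The paper avoids this difficulty by staying entirely on the dual side. It observes that the Ellipsoid algorithm is deterministic and interacts with the cost only through $\SEP$. For every query $p^{(l)}$ issued during the run, the returned tuple $\jvec^{(l)}$ lies in $S$ by construction, and since $C'$ agrees with $C$ on $S$ and satisfies $C' \geq C$ elsewhere, an induction on $l$ shows that
\[
\min_{\jvec \in [n]^k}\Bigl(C'_{\jvec} - \sum_{i=1}^k [p_i^{(l)}]_{j_i}\Bigr)
=
\min_{\jvec \in [n]^k}\Bigl(C_{\jvec} - \sum_{i=1}^k [p_i^{(l)}]_{j_i}\Bigr),
\]
with the same minimizer $\jvec^{(l)}$. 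Hence the Ellipsoid trajectory---and therefore the optimal dual value---is identical whether the cost is $C$ or $C'$. Strong duality then gives equality of the primal optima. This argument never needs to locate a primal support inside $S$; that conclusion instead falls out \emph{a posteriori} from Observations~\ref{obs:C':val} and~\ref{obs:motsmotpval} combined. You were close to this when you considered the dual route, but you abandoned it after noticing that mere dual feasibility of $p^*$ for $C'$ gives only the easy direction; the missing idea is the Ellipsoid-consistency (simulation) argument that pins down the dual optimum for $C'$ exactly.
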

\begin{proof}
	Since the Ellipsoid algorithm is deterministic and accesses the cost only through the $\SEP$ oracle, an inductive argument shows that all for all $l \in [L]$,
	$$\min_{\jvec \in [n]^k} C'_{\jvec} - \sum_{i=1}^k [p_i^{(l)}]_{j_i} = \min_{\jvec \in [n]^k} C_{\jvec} - \sum_{i=1}^k [p_i^{(l)}]_{j_i}.$$ 
	That is, $C'$ is consistent with $C$ on the $\SEP$ queries made by the Ellipsoid algorithm. Therefore~\eqref{MOT-D} has the same value with cost $C'$ or $C$. We conclude by strong duality.
\end{proof}

\begin{obs}\label{obs:motsmotpval}
	The optimal values of $\MOTp$ and $\MOTS$ are equal.
\end{obs}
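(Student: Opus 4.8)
The plan is to show both inequalities between the optimal values of $\MOTp$ and $\MOTS$. Since $\cM_S(\mu_1,\dots,\mu_k) \subseteq \Coup$, any feasible point for $\MOTS$ is feasible for $\MOTp$, and on such points the cost $C'$ agrees with $C$ (because $C'_{\jvec} = C_{\jvec}$ for $\jvec \in S$, and such $P$ have $P_{\jvec}=0$ off $S$). Hence $\langle P, C' \rangle = \langle P, C\rangle$ for every $P \in \cM_S(\mu_1,\dots,\mu_k)$, which immediately gives that the optimal value of $\MOTp$ is at most that of $\MOTS$.

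For the reverse inequality, I would argue that there is an optimal solution of $\MOTp$ supported on $S$. Let $P$ be any optimal solution of $\MOTp$. Off $S$, the cost $C'$ takes the value $2U$, which is strictly larger than any entry of $C$ (recall $\log U$ bounds the bit-complexity of entries of $C$, so $|C_{\jvec}| < U < 2U$ — I would double-check the exact normalization, but the point is that $2U$ strictly dominates every on-support cost). The idea is then that $P$ cannot place any mass off $S$ in an optimal solution, \emph{provided} $\cM_S(\mu_1,\dots,\mu_k)$ is nonempty: if $\cM_S(\mu_1,\dots,\mu_k) \ne \emptyset$, pick any $Q$ in it; then $\langle Q, C'\rangle = \langle Q, C\rangle \leq (\text{trivial bound} < U)$, whereas if $P$ had positive mass off $S$, then $\langle P, C'\rangle \geq 2U \cdot (\text{that mass}) - (\text{bounded contribution from on }S) $, and a short calculation shows this exceeds $\langle Q, C'\rangle$, contradicting optimality of $P$. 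Therefore the optimal $P$ is supported on $S$, hence feasible for $\MOTS$, and attains the same cost there, giving that the optimal value of $\MOTS$ is at most that of $\MOTp$.

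The one gap to close is the nonemptiness of $\cM_S(\mu_1,\dots,\mu_k)$, i.e., that $S$ actually supports \emph{some} coupling with the correct marginals. I expect this to be the main (though minor) obstacle, and I would handle it via strong LP duality: since the Ellipsoid algorithm found an optimal dual solution for \eqref{MOT-D} with cost $C'$ (by the argument in Observation~\ref{obs:C':val}, $C'$ and $C$ give the same dual problem on the queried points, and complementary slackness / the structure of the final Ellipsoid certificate pins down optimality), the primal $\MOTp$ has an optimal solution, and complementary slackness with the returned tuples $\jvec^{(\ell)}$ forces that optimal primal solution to be supported on $S$. This simultaneously shows $\cM_S \ne \emptyset$ and that $\MOTp$ has an optimum on $S$, closing both directions at once. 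Combining the two inequalities yields equality of the optimal values.
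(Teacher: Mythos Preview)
Your ``short calculation'' does not go through. If an optimal $P$ for $\MOTp$ puts mass $\eps$ off $S$, the off-$S$ contribution to $\langle P,C'\rangle$ is only $2U\eps$, which for small $\eps$ is far below the trivial bound $U$ you quote for $\langle Q,C'\rangle$; nothing forces $\langle P,C'\rangle > \langle Q,C'\rangle$. So the comparison against an arbitrary $Q \in \cM_S(\mu_1,\dots,\mu_k)$ does not yield a contradiction, and you are then forced into the complementary-slackness detour. That detour is not clearly correct either: complementary slackness tells you that optimal primal mass sits on \emph{tight} dual constraints, but you have not argued that for the optimal dual $p^*$ the constraints with $\jvec \notin S$ are all slack (i.e., that $\sum_i [p^*_i]_{j_i} < 2U$ for every $\jvec \notin S$); the Ellipsoid trajectory by itself does not hand you this.

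The paper sidesteps all of this by using Observation~\ref{obs:C':val} directly, and this is the missing idea in your attempt. Suppose $P$ is optimal for $\MOTp$ but has some mass off $S$. Since $C' > C$ strictly off $S$ and $C' = C$ on $S$, one gets $\langle P, C\rangle < \langle P, C'\rangle$. But $P$ is feasible for \eqref{MOT-P}, so the optimal value of \eqref{MOT-P} is at most $\langle P, C\rangle$, which is strictly less than the optimal value of $\MOTp$. This contradicts Observation~\ref{obs:C':val}. Thus every optimal $P$ for $\MOTp$ is supported on $S$, which simultaneously shows $\cM_S(\mu_1,\dots,\mu_k) \neq \emptyset$ and gives the reverse inequality, with no need to produce a comparison point $Q$ or to invoke complementary slackness.
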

\begin{proof}
	Since $C$ and $C'$ agree on $S$, it suffices to show that every optimal solution for $\MOTp$ is supported on $S$. Suppose for contradiction that there exists an optimal solution $P$ for $\MOTp$ that is not supported on $S$. Then we must have $\langle P, C \rangle < \langle P, C' \rangle$, since $C'$ is strictly larger than $C$ on $[n]^k \sm S$. However, since $P$ is feasible for \eqref{MOT-P}, this implies that the value of \eqref{MOT-P} is strictly less than that of $\MOTp$, contradicting Observation~\ref{obs:C':val}. 
\end{proof}

\begin{obs}\label{obs:vertex}
	Every vertex of $\cM_S(\mu_1, \dots, \mu_k)$ is a vertex of $\Coup$.
\end{obs}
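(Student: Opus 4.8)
The plan is to observe that $\cM_S(\mu_1,\dots,\mu_k)$ is not merely a lower-dimensional slice of the transportation polytope but in fact a \emph{face} of it, and then to invoke the elementary fact that a vertex of a face of a polytope is a vertex of the whole polytope. The point that makes the slice a face — rather than a generic intersection with a subspace, which could create spurious vertices — is that the extra defining constraints $P_{\jvec}=0$ for $\jvec \notin S$ are exactly the tightenings of the valid inequalities $P_{\jvec} \geq 0$ that already hold on all of $\Coup \subseteq \Rpntk$.

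Concretely, I would first exhibit a supporting linear functional. Define $\ell(P) := -\sum_{\jvec \notin S} P_{\jvec}$. Since every $P \in \Coup$ has nonnegative entries, $\ell(P) \leq 0$ on $\Coup$, with equality precisely when $P_{\jvec}=0$ for all $\jvec \notin S$, i.e. precisely when $P \in \cM_S(\mu_1,\dots,\mu_k)$. Hence $\cM_S(\mu_1,\dots,\mu_k) = \argmax_{P \in \Coup} \ell(P)$ is the face of $\Coup$ on which $\ell$ is maximized. A vertex of $\cM_S(\mu_1,\dots,\mu_k)$ is then a $0$-dimensional face of $\cM_S(\mu_1,\dots,\mu_k)$, hence (a face of a face being a face) a $0$-dimensional face of $\Coup$, i.e. a vertex of $\Coup$.

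If one prefers to avoid the ``face of a face'' machinery, the same conclusion follows directly from the extreme-point characterization of vertices: suppose $P \in \cM_S(\mu_1,\dots,\mu_k)$ is a vertex of $\cM_S(\mu_1,\dots,\mu_k)$ and $P = \tfrac12(Q+R)$ with $Q,R \in \Coup$; then for each $\jvec \notin S$ we have $0 = P_{\jvec} = \tfrac12(Q_{\jvec}+R_{\jvec})$ with $Q_{\jvec},R_{\jvec} \geq 0$, forcing $Q_{\jvec}=R_{\jvec}=0$, so $Q,R \in \cM_S(\mu_1,\dots,\mu_k)$; vertexhood of $P$ in $\cM_S(\mu_1,\dots,\mu_k)$ then gives $Q=R=P$, so $P$ is a vertex of $\Coup$. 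There is no real obstacle here; the only thing to be careful about is not to forget the nonnegativity constraints defining $\Coup$, since these are precisely what turn the slice into a face.
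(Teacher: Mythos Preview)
Your proposal is correct, and your second argument (the direct extreme-point characterization via $P=\tfrac12(Q+R)$ forcing $Q,R$ to vanish off $S$ by nonnegativity) is exactly the paper's proof. Your first argument, recognizing $\cM_S$ as the face $\argmax_{P\in\Coup}\ell(P)$ for $\ell(P)=-\sum_{\jvec\notin S}P_{\jvec}$, is a clean alternative that makes the structural reason explicit, but the paper opts for the bare-hands version you also supply.
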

\begin{proof}
	Let $P$ be a vertex of $\cM_S(\mu_1, \dots, \mu_k)$, and let $
	P = \lambda Q + (1-\lambda) R
	$
	for $\lambda \in [0,1]$ and $Q,R \in \Coup$. It suffices to show that $Q = R = P$. Since $P$ is supported on tuples in $S$, and since $Q$ and $R$ are entrywise non-negative, we have that $Q$ and $R$ are also supported on $S$. Therefore $Q, R \in \cM_S(\mu_1, \dots, \mu_k)$. But since $P$ is a vertex of $\cM_S(\mu_1, \dots, \mu_k)$, this implies that $Q = R = P$.
\end{proof}

\subsection{Step 2: efficient algorithm for the separation oracle}\label{ssec:alg:step2}

Here we prove Proposition~\ref{prop:geomoracle} and describe the algorithm in it for efficiently implementing the $\SEP$ oracle for the cost $C$ in~\eqref{C-WASS}. Recall that this $\SEP$ oracle requires computing an optimal tuple for 
\begin{equation}\argmin_{\jvec \in [n]^k} \min_{y \in \RR^d} g(\jvec,y) \label{sep-oracle}
\end{equation}
where
\[
g(\jvec,y) := \sum_{i=1}^k \lambda_i (\|x_{i,j_i} - y\|^2 - [w_i]_{j_i}),
\]
and $w_i$ denotes $p_i/\lambda_i$. At a high level, our approach is to swap the order of minimization, optimize over $y \in \R^d$, and then (easily) recover an optimal tuple from this optimal $y$. The difficulty is in the optimization over $y \in \R^d$. The key to performing this efficiently is partitioning the space $\R^d$ into a ``cell complex'' such that (i) the optimization over $y$ in each cell is easy, and (ii) there are only polynomially many cells. Operationally, this allows us to reduce the separation oracle optimization~\eqref{sep-oracle} to optimizing over only a polynomially sized set of candidate tuples in $[n]^k$---one for each cell---which we moreover show can be efficiently identified and enumerated.

\par To formalize this, we make the following key definitions.
Define for $i \in [k]$ and $j \in [n]$ the set
\begin{align}
	E_{i,j} =
	\{y \in \RR^d \, : \, \|x_{i,j} - y\|^2 - [w_i]_j < \|x_{i,j'} - y\|^2 - [w_i]_{j'}, \, \forall j' \neq j \},
	\label{eq:Eij}
\end{align}
and define for each tuple $\jvec \in [n]^k$ the set
\begin{align}
	F_{\jvec} = \bigcap_{i=1}^k E_{i,j_i}.
	\label{eq:cF}
\end{align}
Geometrically, for each $i \in [k]$, the cells $\{E_{i,j}\}_{j \in [n]}$ form a power diagram (see \S\ref{ssec:prelim:cg}) on the spheres $S(x_{i,1},r_{i,1}),\ldots,S(x_{i,n},r_{i,n})$, where the $j$-th sphere is centered at point $x_{i,j}$ and has radius $r_{i,j} := \sqrt{[w_i]_j - \min_{j'} [w_i]_{j'}} \geq 0$. Each power diagram ``essentially'' partitions $\R^d$ in the sense that its constituent cells are disjoint and their closures cover $\R^d$; see Figure~\ref{fig:powerdiagexample} for an illustration. The cell complex $\{F_{\jvec}\}_{\jvec \in [n]^k}$ is the intersection of these $k$ power diagrams and ``essentially'' partitions $\R^d$ in the analogous way; see Figure~\ref{fig:powerdiagoverlay} for an illustration.

\begin{figure}
	\centering
	\begin{tabular}{@{}c@{}c@{}ccc@{}}
		\begin{tabular}{@{}c@{}}
			\includegraphics[clip, trim = 3.6cm 1.2cm 3.2cm 1.2cm, scale=0.37]{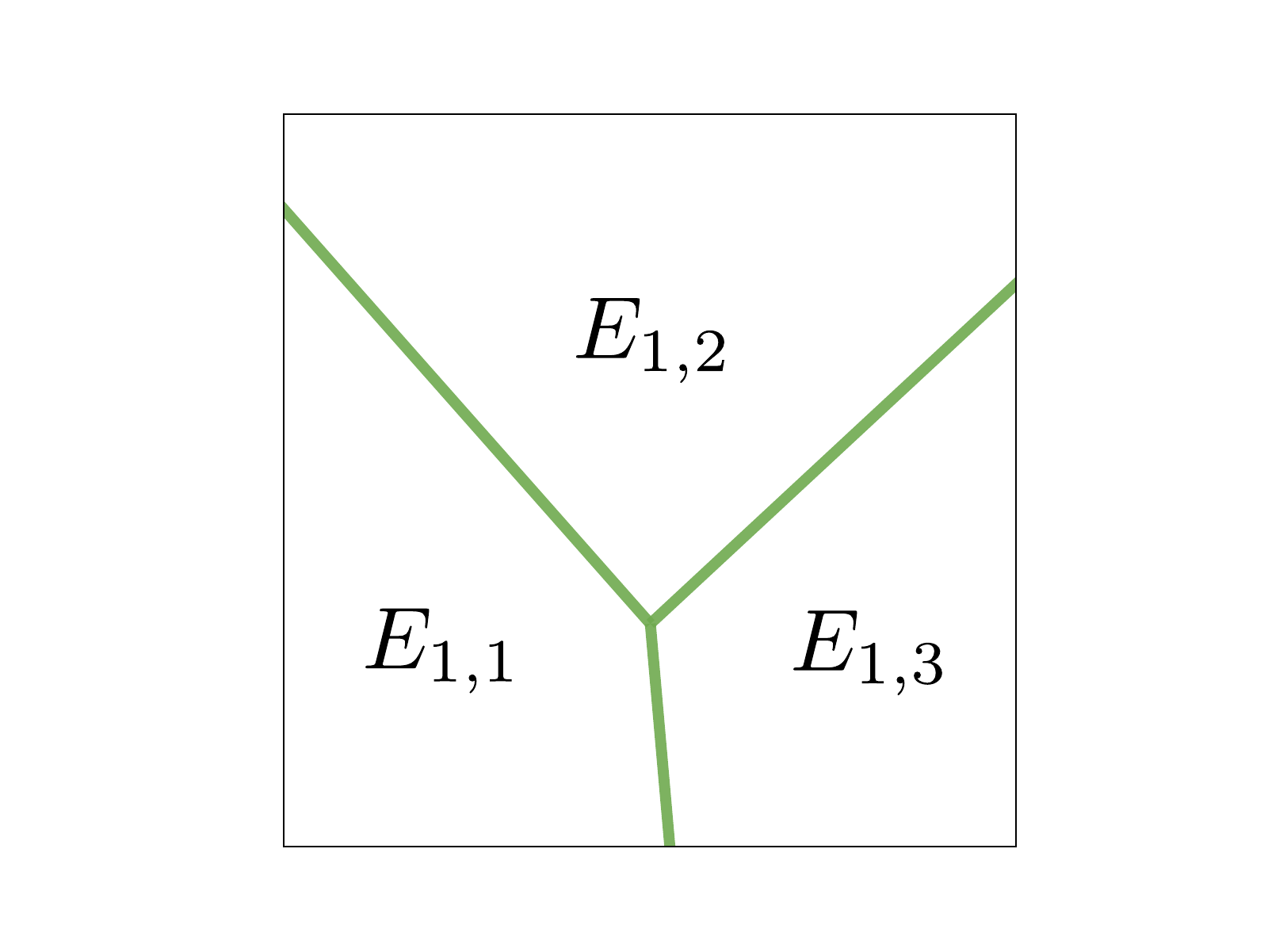}
		\end{tabular} \ 
		&
		\begin{tabular}{@{}c@{}}
			\includegraphics[clip, trim = 3.6cm 1.2cm 3.2cm 1.2cm, scale=0.37]{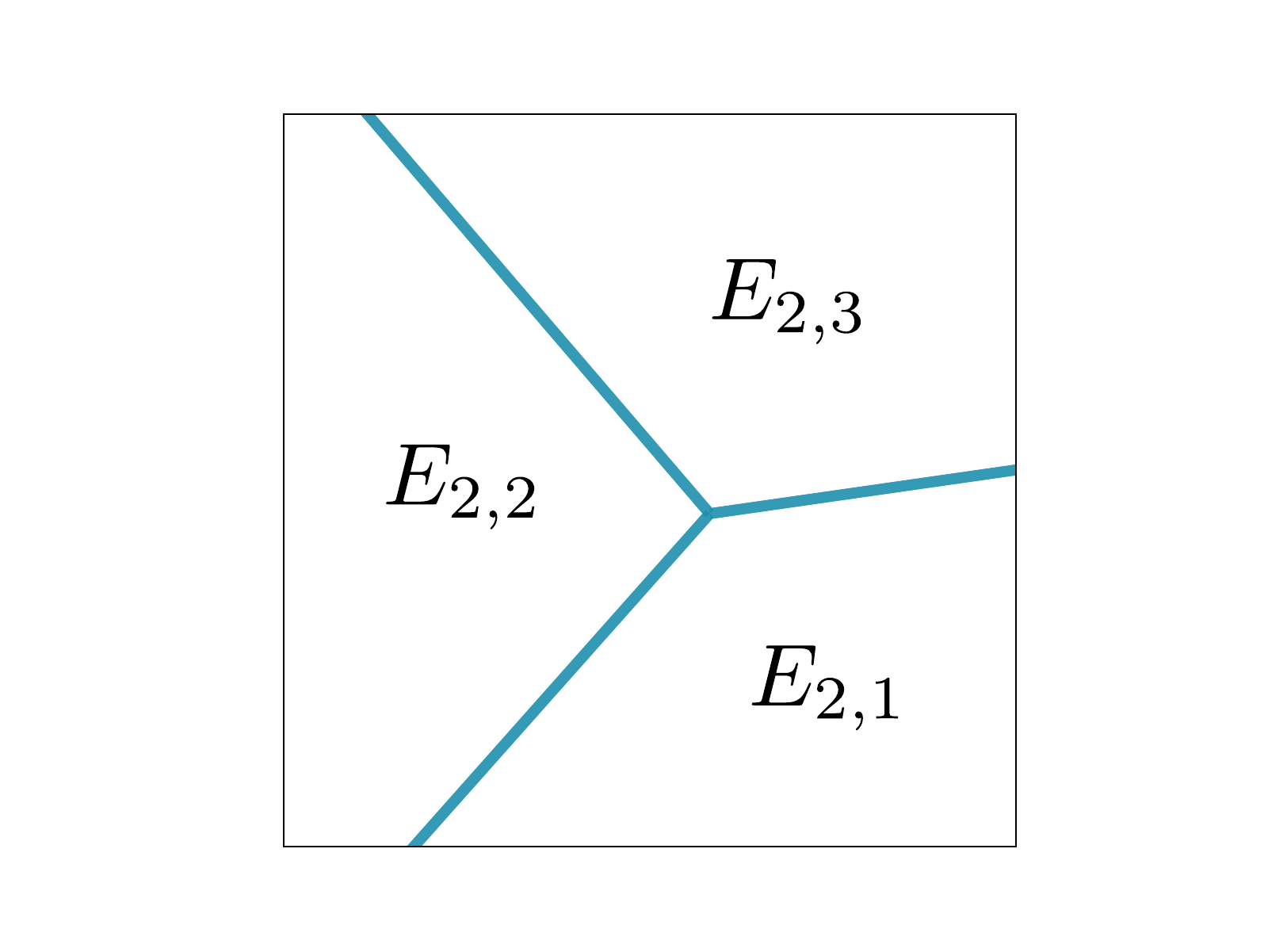}
		\end{tabular} \ 
		&
		\begin{tabular}{@{}c@{}}
			\includegraphics[clip, trim = 3.6cm 1.2cm 3.2cm 1.2cm, scale=0.37]{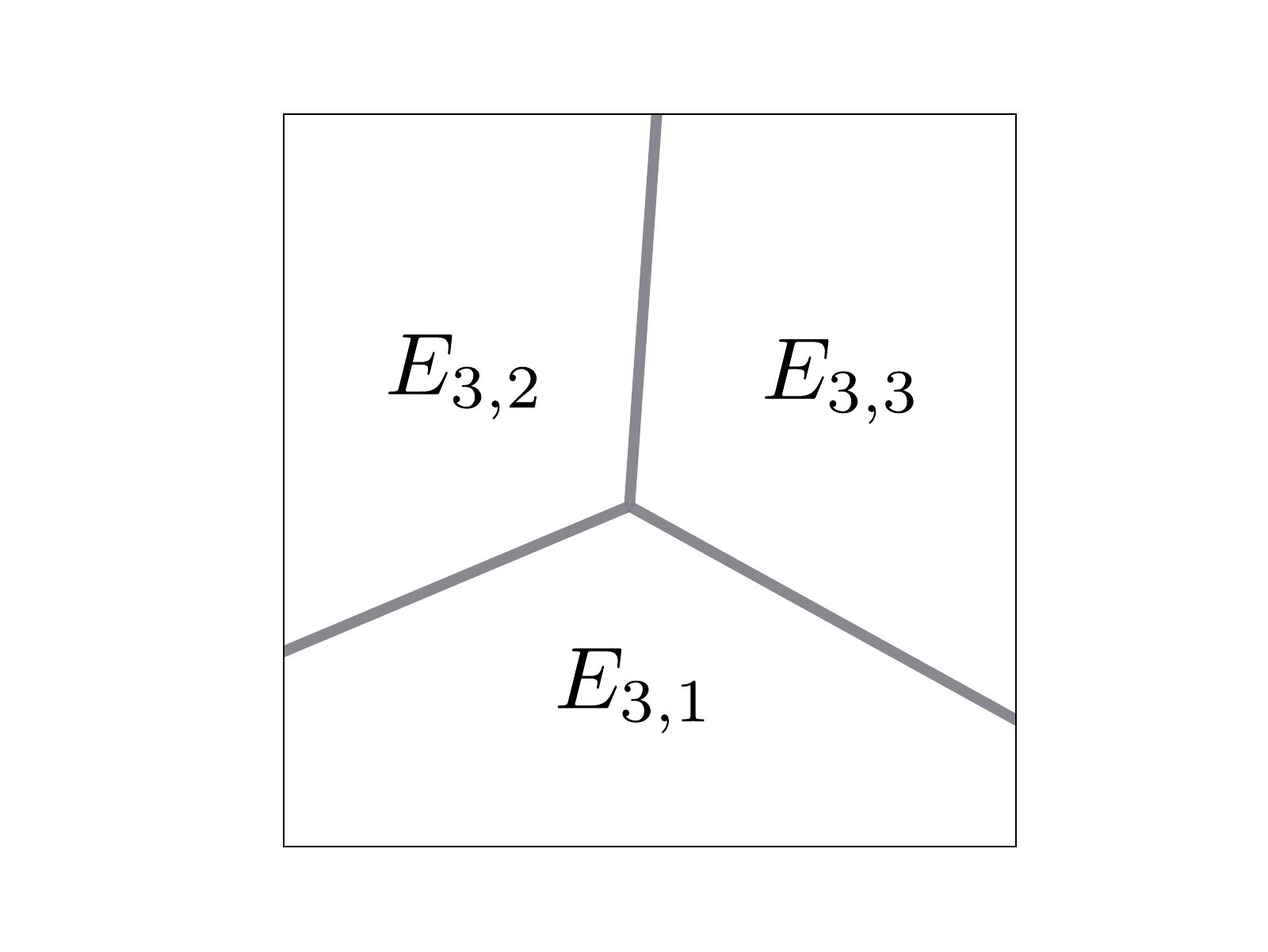}
		\end{tabular}
		&
		\begin{tabular}{@{}c@{}}
			$\mathbf{\xrightarrow{\text{\textbf{intersect}}}}$
		\end{tabular}
		&
		\begin{tabular}{@{}c@{}}
			\includegraphics[clip, trim = 3.6cm 1.2cm 3.2cm 1.2cm, scale=0.37]{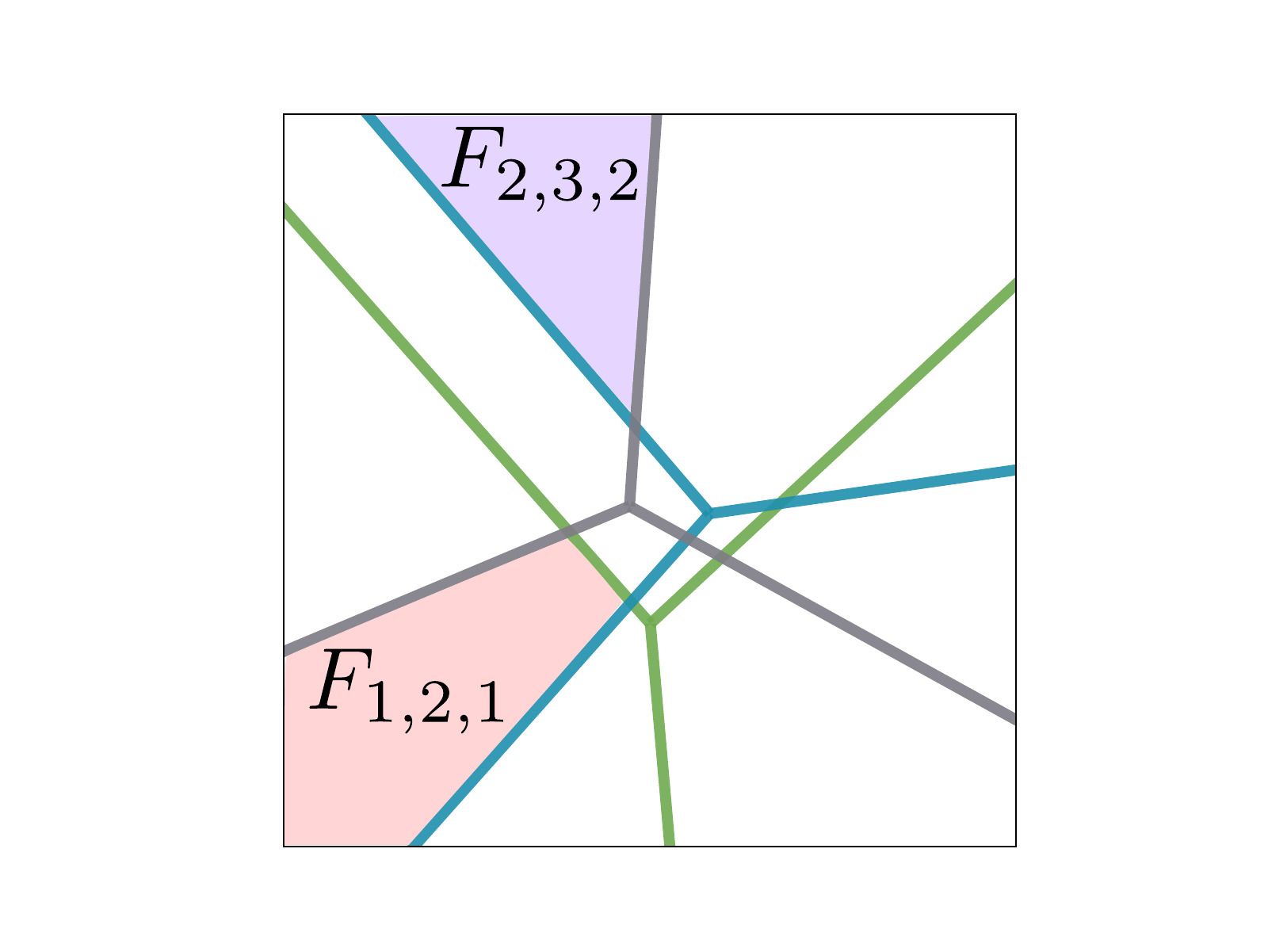}
		\end{tabular}
	\end{tabular}
	\caption{Illustrates $k = 3$ power diagrams $\{ \{E_{i,j}\}_{j \in [n]} \}_{i \in [k]}$ each with $n=3$ cells, and their intersection $\{F_{\jvec}\}_{\jvec \in [n]^k}$. For instance, the red cell in the intersected diagram is $F_{1,2,1}= E_{1,1} \cap E_{2,2} \cap E_{3,1}$, and the purple cell is $F_{2,3,2} = E_{1,2} \cap E_{2,3} \cap E_{3,2}$. Note that the intersected diagram has only $13$ non-empty cells, which is less than $n^k = 27$ (c.f., Lemma~\ref{lem:sep-runtime}).} \label{fig:powerdiagoverlay}
\end{figure}

The heart of our algorithm lies in the following two lemmas. The first lemma shows that the optimization~\eqref{sep-oracle} over the exponentially many tuples $j \in [n]^k$ may be restricted to just those whose corresponding cell $F_{\jvec}$ is non-empty, i.e., we may restrict to the tuples in
\begin{align}
	T := \{\jvec \in [n]^k \; : \; F_{\jvec} \neq \emptyset \}.
\end{align}
The second lemma shows that this candidate set $T$ contains only polynomially many tuples and moreover can be efficiently enumerated. Briefly, the first lemma exploits the fact that the optimization over $y \in \R^d$ is equivalent to optimizing over the cells in $F_{\jvec}$, and the second lemma exploits complexity bounds for the intersections of power diagrams. Together, these lemmas are sufficient to efficiently solve the separation oracle because for any fixed $\jvec$, the value $\min_{y \in \R^d} g(\jvec,y)$ can be efficiently computed in closed-form (as shown below in~\eqref{sep-oracle:T}).

\begin{lemma}\label{lem:sep-correct}
	The optimization over $\jvec \in [n]^k$ in the separation oracle problem~\eqref{sep-oracle} can be equivalently restricted to $\jvec \in T$. That is,
	\[
	\min_{\jvec \in [n]^k} \min_{y \in \R^d} g(\jvec,y)
	=
	\min_{\jvec \in T} \min_{y \in \R^d} g(\jvec,y).
	\]
\end{lemma}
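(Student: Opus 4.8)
The plan is to prove the two directions of the claimed equality separately; the ``$\leq$'' direction is trivial since $T \subseteq [n]^k$, so the content is in showing ``$\geq$''. First I would fix a minimizing tuple $\jvec^\star \in [n]^k$ for the left-hand side and an associated minimizer $y^\star \in \R^d$, so that $g(\jvec^\star, y^\star) = \min_{\jvec \in [n]^k} \min_{y \in \R^d} g(\jvec,y)$; such minimizers exist because for each $\jvec$ the map $y \mapsto g(\jvec,y)$ is a strictly convex quadratic (a positive combination of squared distances) and $[n]^k$ is finite. The goal is then to produce a tuple $\jvec' \in T$ with $\min_{y} g(\jvec', y) \leq g(\jvec^\star, y^\star)$, which suffices.

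The key observation is the pointwise identity
\[
\min_{\jvec \in [n]^k} g(\jvec, y) = \sum_{i=1}^k \lambda_i \min_{j \in [n]} \bigl( \|x_{i,j} - y\|^2 - [w_i]_j \bigr),
\]
valid for \emph{every} fixed $y \in \R^d$, since the objective $g(\jvec,y) = \sum_i \lambda_i(\|x_{i,j_i}-y\|^2 - [w_i]_{j_i})$ decouples across the coordinates $j_1,\dots,j_k$ of $\jvec$. Now I would apply this at the point $y = y^\star$. For each $i \in [k]$, pick $j_i' \in \argmin_{j \in [n]} (\|x_{i,j} - y^\star\|^2 - [w_i]_j)$, breaking ties so that the strict inequalities defining $E_{i,j_i'}$ in~\eqref{eq:Eij} hold at $y^\star$ — this is possible because if several indices $j$ attain the minimum of $\|x_{i,j}-y^\star\|^2 - [w_i]_j$, the cells $E_{i,j}$ have $y^\star$ on their common boundary, but one can perturb: more cleanly, recall that the closures $\overline{E_{i,j}}$ cover $\R^d$, so $y^\star \in \overline{E_{i,j_i'}}$ for some minimizing index $j_i'$. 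Setting $\jvec' := (j_1',\dots,j_k')$, we get $g(\jvec', y^\star) = \sum_i \lambda_i \min_j(\|x_{i,j} - y^\star\|^2 - [w_i]_j) = \min_{\jvec} g(\jvec, y^\star) \leq g(\jvec^\star, y^\star)$.

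It remains to check $\jvec' \in T$, i.e.\ $F_{\jvec'} = \bigcap_i E_{i,j_i'} \neq \emptyset$. Here the cleanest route is to argue that $y^\star$ actually lies in the intersection of the \emph{open} cells, not just their closures. Suppose $y^\star \notin E_{i,j_i'}$ for some $i$; then by definition of the power-diagram cell there is $j'' \neq j_i'$ with $\|x_{i,j''} - y^\star\|^2 - [w_i]_{j''} \leq \|x_{i,j_i'} - y^\star\|^2 - [w_i]_{j_i'}$, and by minimality of $j_i'$ this must be an equality, so $y^\star$ is on the shared facet of $E_{i,j_i'}$ and $E_{i,j''}$. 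In that case I would move $y^\star$ slightly: since the facets form a measure-zero set, for generic $y$ near $y^\star$ each coordinate's minimizer is unique, giving an honest point of $F_{\jvec''}$ for the corresponding tuple $\jvec''$, whose value $\min_y g(\jvec'', y)$ is still $\leq g(\jvec^\star, y^\star)$ by the same decoupling identity evaluated at that nearby $y$ together with a limiting/continuity argument. The main obstacle is precisely this tie-breaking bookkeeping — ensuring that a minimizer of the separation objective can be taken in the interior of some cell $F_{\jvec}$ rather than on a boundary — and I expect the slickest presentation is to first prove the decoupling identity, then invoke that the open cells $\{E_{i,j}\}_j$ partition $\R^d$ up to a measure-zero set and argue by a limiting argument from the dense set of ``generic'' points where all coordinate-minimizers are unique.
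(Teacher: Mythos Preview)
Your approach is correct and shares the paper's key ingredient --- the separability of $g(\jvec,y)$ in the coordinates of $\jvec$. The difference is the order of operations, and that is what forces you into the perturbation bookkeeping. You first pick a minimizing tuple $\jvec'$ at $y^\star$ coordinate-by-coordinate and then try to show $\jvec' \in T$; as you correctly diagnose, $y^\star$ may sit on a facet, so $F_{\jvec'}$ can be empty for your chosen $\jvec'$, and a limiting argument is needed. The paper reverses this: it first invokes that the closures $\{\overline{F_{\jvec}}\}_{\jvec \in T}$ cover $\R^d$, so any $y$ lies in some $\overline{F_{\jvec}}$ with $\jvec \in T$ already, and \emph{then} uses separability to observe that for $y \in \overline{F_{\jvec}}$ the tuple $\jvec$ attains $\min_{\lvec} g(\lvec,y)$. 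Membership in $T$ thus comes for free, only weak-inequality optimality of $\jvec$ is required, and ties are harmless. Concretely, the paper's chain is
\[
\min_{\lvec \in [n]^k}\min_{y} g(\lvec,y)
= \min_{\jvec \in T}\min_{y \in \overline{F_{\jvec}}}\min_{\lvec} g(\lvec,y)
= \min_{\jvec \in T}\min_{y \in \overline{F_{\jvec}}} g(\jvec,y)
\geq \min_{\jvec \in T}\min_{y \in \R^d} g(\jvec,y).
\]
Your perturbation/density argument (generic $y$ have unique coordinate minimizers, then pass to a limit along a sequence using finiteness of $T$) is essentially a hand-unrolled proof of the covering fact $\bigcup_{\jvec \in T} \overline{F_{\jvec}} = \R^d$; the paper simply quotes it as the standard statement that the intersected power diagram ``essentially partitions'' $\R^d$.
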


\begin{proof}
	The inequality ``$\leq$'' is obvious; we show the other inequality ``$\geq$''. By swapping the order of minimization and using the fact that $\{F_{\jvec}\}_{\jvec \in T}$ cover $\R^d$ modulo closure, we have
	\begin{align*}
		\min_{\lvec \in [n]^k}	\min_{y \in \R^d} g(\lvec,y)
		=
		\min_{y \in \R^d} \min_{\lvec \in [n]^k}  g(\lvec,y)
		=
		\min_{\jvec \in T} \min_{y \in \overline{F_{\jvec}}} \min_{\lvec \in [n]^k}  g(\lvec,y).
	\end{align*}
	We claim that the inner minimization over $\lvec$ is explicit: $\lvec = \jvec$. Indeed, by separability of $g$ in the coordinates of $\lvec$ and non-negativity of $\lambda_i$, for each $i \in [n]$ the optimal $\ell_i$ is a solution of $\argmin_{\ell_i \in [n]} \|x_{i,\ell_i} - y\|^2 - [w_i]_{\ell_i}$; and $j_i$ is a solution of this by definition of $E_{i,j_i}$ (see~\eqref{eq:Eij}) and the fact that $\overline{E_{i,j_i}}$ contains $y$ (by definition of $F_{\jvec}$, see~\eqref{eq:cF}). Therefore
	\begin{align*}
		\min_{\jvec \in T} \min_{y \in \overline{F_{\jvec}}} \min_{\lvec \in [n]^k}  g(\lvec,y)
		=
		\min_{\jvec \in T} \min_{y \in \overline{F_{\jvec}}}
		g(\jvec,y).
	\end{align*}
	Now by enlarging the optimization region, we have the simple bound
	\begin{align*}
		\min_{\jvec \in T} \min_{y \in \overline{F_{\jvec}}}
		g(\jvec,y)
		\geq 
		\min_{\jvec \in T} \min_{y \in \R^d}
		g(\jvec,y).
	\end{align*}
	Combining the above three displays completes the proof.
\end{proof}

\begin{lemma}\label{lem:sep-runtime}
	For any fixed dimension $d$, the set $T$ can be enumerated in $\poly(n,k,\log U)$ time.
\end{lemma}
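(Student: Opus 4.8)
The plan is to show that the membership condition ``$F_{\jvec}\neq\emptyset$'' is decided, simultaneously for all $\jvec$, by a single hyperplane arrangement of polynomial size, and then to read off $T$ from this arrangement using the computational-geometry facts in \S\ref{ssec:prelim:cg}. First I would, for each $i\in[k]$, run the power-diagram algorithm of Lemma~\ref{lem:powdiagramruntime} on the $n$ spheres $S(x_{i,j},r_{i,j})$ defining the diagram $\{E_{i,j}\}_{j\in[n]}$; this produces, in $\poly(n,k,\log U)$ time for fixed $d$, the $O(n)$ affine facet hyperplanes of that diagram. Let $\cH_i$ be this set, let $\cH=\bigcup_{i=1}^k\cH_i$ (so $|\cH|=O(nk)$), and let $\cA$ be the hyperplane arrangement of $\cH$ in $\R^d$.

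The structural heart of the argument is that $\cA$ refines the cell complex $\{F_{\jvec}\}$: each full-dimensional cell of $\cA$ lies inside a single $F_{\jvec}$. Indeed, for a fixed $i$ the hyperplanes $\cH_i$ contain the entire $(d-1)$-skeleton of the power diagram $\{E_{i,j}\}_j$, so $\R^d\setminus\bigcup\cH_i\subseteq\bigcup_j E_{i,j}$; a full-dimensional cell $\sigma$ of $\cA$ is connected and disjoint from $\bigcup\cH_i$, and the $E_{i,j}$ are pairwise disjoint open sets, so $\sigma$ is contained in a single power cell $E_{i,j_i(\sigma)}$. Intersecting over $i$ gives $\sigma\subseteq F_{\jvec(\sigma)}$ with $\jvec(\sigma):=(j_1(\sigma),\dots,j_k(\sigma))$; in particular $F_{\jvec(\sigma)}\neq\emptyset$. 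Conversely, a nonempty $F_{\jvec}$ is an intersection of open convex sets, hence a nonempty open set of positive measure, so it contains a point avoiding the measure-zero set $\bigcup\cH$; that point lies in some full-dimensional cell $\sigma$ of $\cA$, and since $F_{\jvec}\cap F_{\jvec'}=\emptyset$ for $\jvec\neq\jvec'$ (the $E_{i,j}$ are disjoint for fixed $i$), we get $\jvec(\sigma)=\jvec$. Hence $T=\{\jvec(\sigma):\sigma\text{ a full-dimensional cell of }\cA\}$. Now I would compute $\cA$ by Lemma~\ref{lem:hyperplaneintersectionenumeration} with $N=O(nk)$, in $(nk)^d\cdot\polylog(nk,U)=\poly(n,k,\log U)$ time; this yields the $O((nk)^d)$ full-dimensional cells together with a representative interior point $y_\sigma$ for each, and for each such $y_\sigma$ the index $j_i(\sigma)=\argmin_{j\in[n]}\big(\|x_{i,j}-y_\sigma\|^2-[w_i]_j\big)$ is the unique minimizer (a tie would put $y_\sigma$ on a facet hyperplane) and coincides with the power cell of $\sigma$ (since $y_\sigma\in\sigma\subseteq E_{i,j_i(\sigma)}$, and $E_{i,j}$ is exactly the strict-minimizer region). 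Iterating over the $O((nk)^d)$ cells and spending $O(nk)$ arithmetic operations per cell then outputs $T$ in $\poly(n,k,\log U)$ time.

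The step I expect to require the most care is the bit-complexity bookkeeping rather than the combinatorics: one must verify that the facet hyperplanes returned by Lemma~\ref{lem:powdiagramruntime} and the representative points $y_\sigma$ returned by Lemma~\ref{lem:hyperplaneintersectionenumeration} have $\poly(\log U)$ bit-length, so that the per-cell $\argmin$ computations are exact and polynomial-time. This should follow because each facet hyperplane is the bisector $-2\langle x_{i,j}-x_{i,j'},y\rangle+\|x_{i,j}\|^2-\|x_{i,j'}\|^2-r_{i,j}^2+r_{i,j'}^2=0$, whose coefficients are low-degree rational expressions in the input data $x_{i,j}$, $\lambda_i$, $p_i$ (recall $w_i=p_i/\lambda_i$ and $r_{i,j}^2=[w_i]_j-\min_{j'}[w_i]_{j'}$), and both cited geometric routines are stated to run with $\polylog U$ precision overhead. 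A secondary point to record is that lower-dimensional faces of $\cA$ may simply be discarded, since every nonempty $F_{\jvec}$ is automatically full-dimensional; this is the observation that makes ``full-dimensional cell of $\cA$'' the correct granularity and keeps the enumeration in line with Lemma~\ref{lem:sep-correct}.
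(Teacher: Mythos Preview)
Your proposal is correct and follows essentially the same approach as the paper: compute the $O(nk)$ facet hyperplanes of the $k$ power diagrams via Lemma~\ref{lem:powdiagramruntime}, build their arrangement via Lemma~\ref{lem:hyperplaneintersectionenumeration}, and read off one tuple per cell. Your write-up is in fact more careful than the paper's, since you spell out both directions of the correspondence between full-dimensional cells of $\cA$ and nonempty $F_{\jvec}$, and you flag the bit-complexity bookkeeping; one tiny quibble is that the parenthetical ``a tie would put $y_\sigma$ on a facet hyperplane'' is not literally true (non-adjacent power cells can have bisectors that are not facets), but your subsequent justification via $y_\sigma\in E_{i,j_i(\sigma)}$ being the strict-minimizer region is the correct one and suffices.
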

\begin{proof}
	By Lemma \ref{lem:powdiagramruntime}, the $O(nk)$ total facets for the $k$ power diagrams $\{\{E_{i,j}\}_{i \in [n]}\}_{j \in [k]}$ can be computed in $\poly(n,k,\log U)$ time. For each facet, compute the $(d-1)$-dimensional hyperplane it lies in. The cell complex $\cH$ formed by these hyperplanes is a subcomplex of the cell complex formed by intersecting the power diagrams. By Lemma~\ref{lem:hyperplaneintersectionenumeration}, we can enumerate the cells in $\cH$ in $\poly(n,k,\log U)$ time. For each cell in $\cH$, the corresponding tuple $\jvec \in [n]^k$ is computable in $O(nk \cdot \polylog \; U)$ time by computing the $k$ coordinates of the tuple separately. Since each non-empty cell $F_{\jvec}$ contains at least one cell in $\cH$, this process enumerates all tuples in $T$.
\end{proof}

We now conclude the desired efficient algorithm for the separation oracle. 
	\begin{proof}[Proof of Proposition~\ref{prop:geomoracle}]
	By Lemma~\ref{lem:sep-correct}, it suffices to output a tuple $\jvec$ minimizing $\min_{\jvec \in T} \min_{y \in \R^d} g(\jvec,y)$. Since $\sum_{i=1}^k \lambda_i x_{i,j_i} \in \argmin_{y \in \R^d} g(\jvec,y)$, it therefore suffices to solve
	\begin{align}
		\argmin_{\jvec \in T}
		\sum_{i=1}^k \lambda_i \|x_{i,j_i}\|^2 
		- \|\sum_{i=1}^k \lambda_i x_{i,j_i}\|^2
		- \sum_{i=1}^k \lambda_i [w_i]_{j_i}.
		\label{sep-oracle:T}
	\end{align}
	Peform this by enumerating the set $T$ using the algorithm in Lemma~\ref{lem:sep-runtime}.
	\end{proof}

\subsection{Putting the pieces together}\label{ssec:final}

\begin{proof}[Proof of Theorem~\ref{thm:exactbary}]
	Assume each $x_{i,j_i}$ and $\lambda_i$ is written to $\log U$ bits of precision.
	Since each entry of the cost tensor \eqref{C-WASS} requires only $O(\log k + \log U)$ bits of precision, and since the parameter $w \in \R^{n \times k}$ in each $\SEP$ query made by the algorithm in Proposition~\ref{prop:generalmot} requires only $\poly(n,k,\log U)$ bits of precision, it follows that the algorithm in Proposition~\ref{prop:generalmot} combined with the $\SEP$ oracle implementation in Proposition~\ref{prop:geomoracle} computes a vertex solution $P^*$  for \eqref{MOT-P} in $\poly(n,k,\log U)$
	time. By Lemma~\ref{lem:vertex-sparse}, $P^*$ has at most $nk-k+1$ non-zero entries. Thus we can recover from $P^*$ an optimal barycenter $\nu$ with support size at most $nk-k+1$ in time $\poly(n,k,\log U)$ by the reduction in Lemma~\ref{lem:bary-to-mot}.
\end{proof}

	\begin{proof}[Proof of Theorem~\ref{thm:mainbary}]
	By rounding both the weights $\lambda_i$ and the coordinates of the atoms $x_{i,j} \in \R^d$ to $\poly(\eps/(Rkd))$ additive accuracy, it can be ensured that each of these numbers requires only $O(\log (Rkd/\eps))$ bits of precision and also that the objective function $\nu \mapsto \sum_{i=1}^k \lambda_i \cW(\mu_i,\nu)$ for the barycenter optimization~\eqref{eq:intro:bary} is preserved pointwise to $\eps$ additive accuracy. This follows from a straightforward calculation and the fact (immediate from the definition of optimal transportation~\citep[\S1]{Vil03} and an application of H\"older's inequality) that if the squared Euclidean distance between each atom of $\mu_i$ and each atom of $\nu$ is preserved up to $\eps'$ additive accuracy, then the squared $2$-Wasserstein distance $\cW(\mu_i,\nu)$ is preserved up to $\eps'$ additive accuracy. Now solve the barycenter problem for the rounded weights and atoms exactly using Theorem~\ref{thm:exactbary}.
\end{proof}

\section{Numerical implementation}\label{sec:experiments}

While the focus of this paper is theoretical, here we briefly mention that a slight variant of our algorithm can provide high-precision solutions at previously intractable problem sizes. To demonstrate this, we implement our algorithm for dimension $d=2$ in Python. The only difference between our numerical implementation and the theoretical algorithm described above is that we use a standard cutting-plane method (see, e.g.,~\citep[\S6.3]{BerTsi97}) for the ``outer loop'' in step $1$ rather than the Ellipsoid algorithm due to its good practical performance. Code and further implementation details are provided on Github.\footnote{\url{https://github.com/eboix/high_precision_barycenters}}

\subsection{Computing exact solutions at previously intractable scales}

\par Figure~\ref{fig:exp} demonstrates that our algorithm solves the barycenter problem~\eqref{eq:intro:bary} to machine precision on an instance with $k=10$ uniform distributions each on $n=20$ points randomly drawn from $[-1,1]^2 \subset \R^2$. In contrast, existing popular barycenter algorithms which use the fixed-support assumption can converge faster but only to lower-precision approximations. This is because the $\Theta(1/\eps^d)$ gridsize that they require for $\eps$-additive approximation results in a large-scale LP which is prohibitive even for relatively low precision $\eps$; see \S\ref{sec:prev} for details. Note also that a standard LP solver requires optimizing over $n^k = 20^{10} \approx 10^{13}$ variables for the LP formulation~\eqref{MOT-P} and thus is clearly infeasible at this scale.

\begin{figure}[h]
	\centering
	\begin{subfigure}{.45\textwidth}
		\centering
		\includegraphics[clip, trim = 0.5cm 0cm 3cm 1cm, scale=0.28]{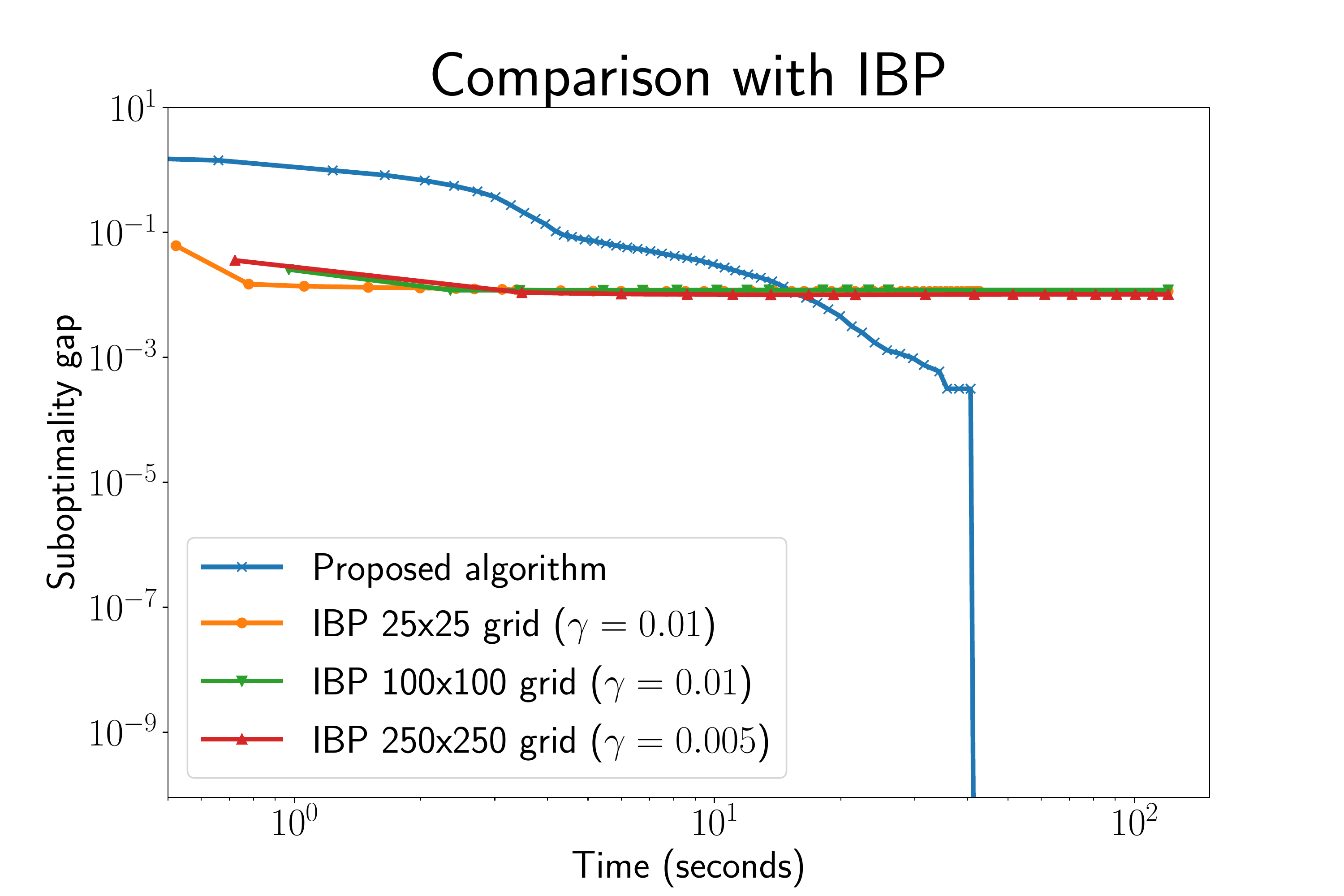}
		\caption{
			Comparison with the Iterated Bregman Projection (IBP) algorithm of~\citep{solomon2015convolutional} using their implementation \url{https://github.com/gpeyre/2015-SIGGRAPH-convolutional-ot}. 
		}
		\label{fig:exp:ibp}
	\end{subfigure}%
	\hfill
	\begin{subfigure}{.45\textwidth}
		\centering
		\includegraphics[clip, trim = 0.5cm 0cm 3cm 1cm, scale=0.28]{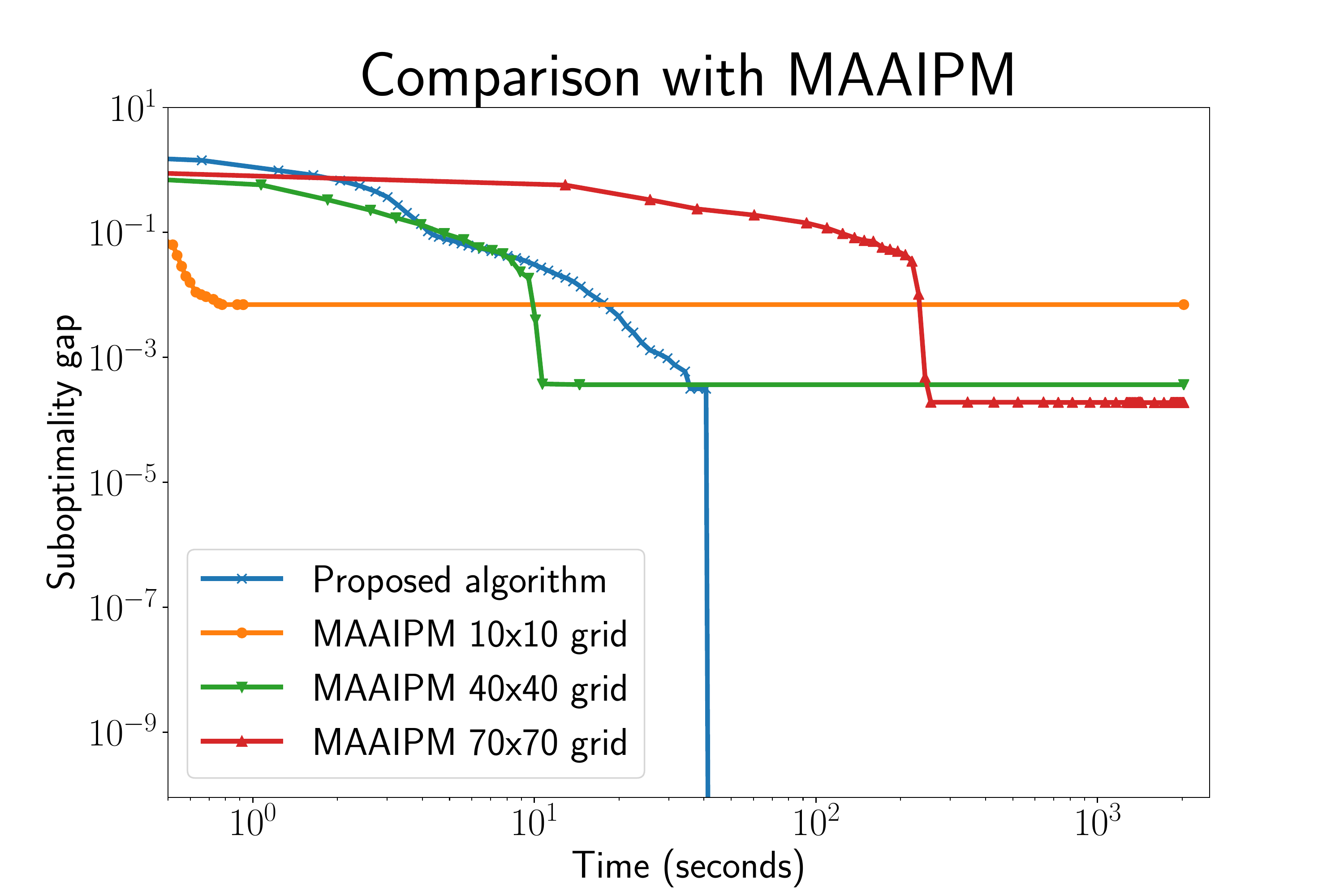}
		\caption{Comparison with the Matrix-based Adaptive Alternating Interior-Point Method (MAAIPM) of~\citep{ge2019interior} using their implementation \url{https://gitlab.com/ZXiong/wasserstein-barycenter}.
		}	\label{fig:exp:ipm}
	\end{subfigure}
	\caption{
		Comparison with state-of-the-art algorithms. The $y$-axis is the suboptimality for the barycenter optimization~\eqref{eq:intro:bary}; note that while standard LP solvers cannot be run at this scale, our algorithm yields an exact solution (certified by our separation oracle) which enables plotting this suboptimality.
		Both compared algorithms require a fixed-support assumption and are run on uniform grids of increasing sizes. IBP has an additional parameter: the entropic regularization $\gamma$, which significantly impacts the algorithm's accuracy and numerical stability, see~\citep{solomon2015convolutional,PeyCut17}. We provide a generous comparison here for IBP by (i) fine-tuning $\gamma$ for it (we binary search for the most accurate $\gamma$; note that their code does not always converge for $\gamma$ small due to numerical instability); and (ii) exactly computing the Wasserstein distances $\cW(\mu_i,\nu)$ to IBP's current barycenter $\nu$ in the barycenter objective~\eqref{eq:intro:bary} using~\citep{Lemon}, which is more accurate than IBP's approximation (this is slow for large grids but is not counted in IBP's timing). Our algorithm finds an exact barycenter after ${\sim}50$ seconds. All experiments are run on a standard $2014$ Lenovo Yoga 720-13IKB laptop.
	}
	\label{fig:exp}
\end{figure}

\subsection{Sharper visualizations}
Here we demonstrate that the high-precision solutions computed by our algorithm yield significantly sharper visualizations than the low-precision solutions that were previously computable. Specifically, here we compare our barycenter algorithm against state-of-the-art methods on a standard benchmark dataset of images of nested ellipses~\citep{CutDou14,janati2020debiased}. This dataset consists of $k = 10$ images, each of size $60 \times 60$. Five of these images are shown in Figure~\ref{fig:ellipses}.

\begin{figure}
	\centering
	\begin{tabular}{ccccc}
		\includegraphics[scale=0.25]{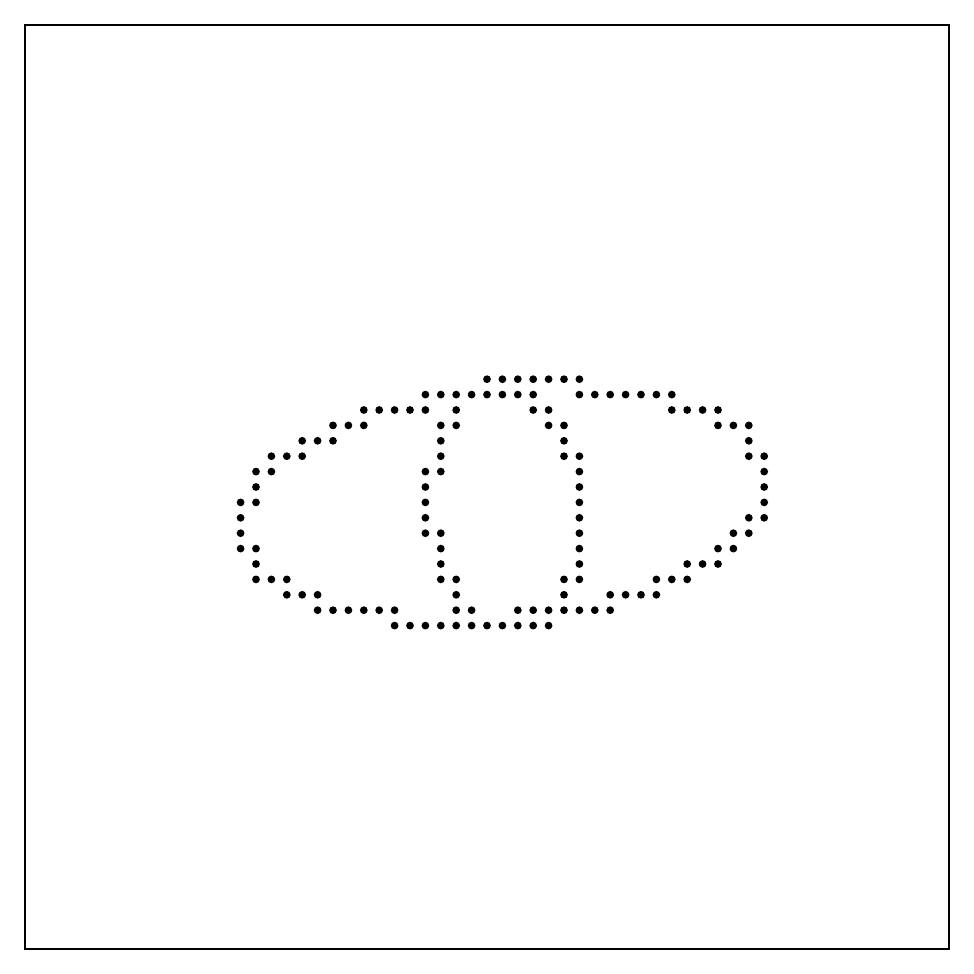} & 
		\includegraphics[scale=0.25]{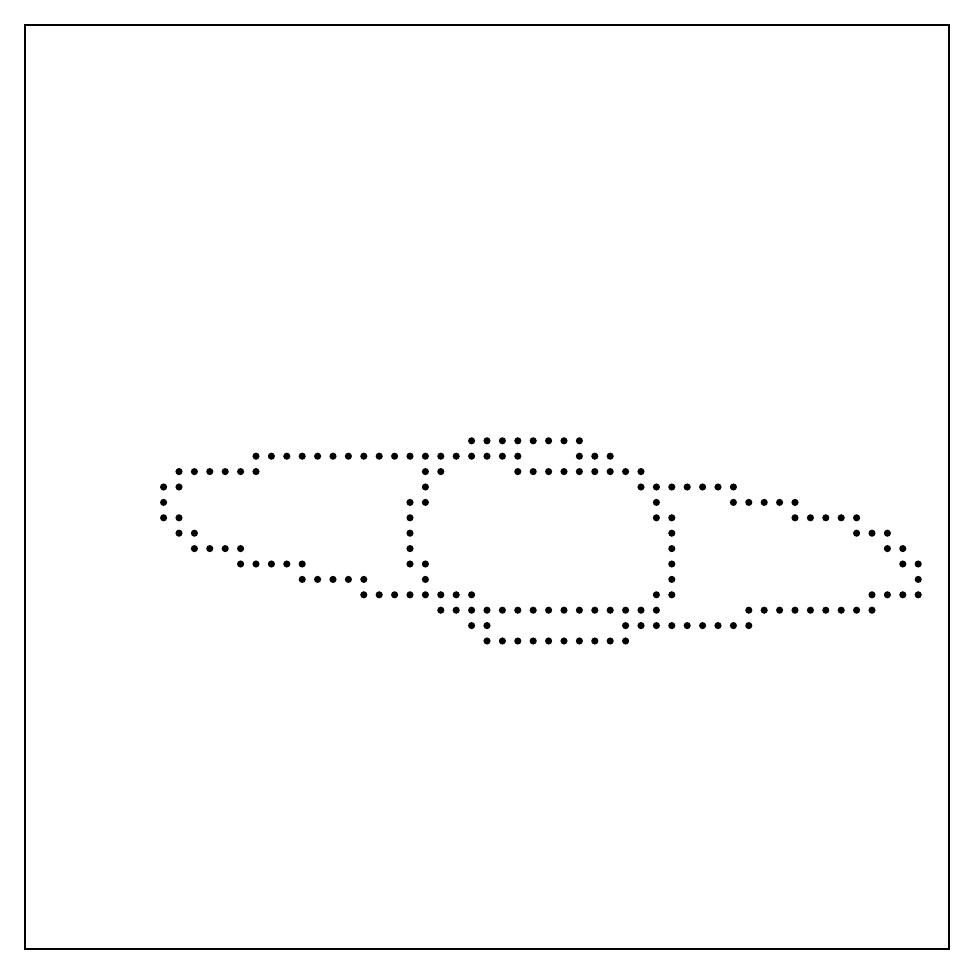} & 
		\includegraphics[scale=0.25]{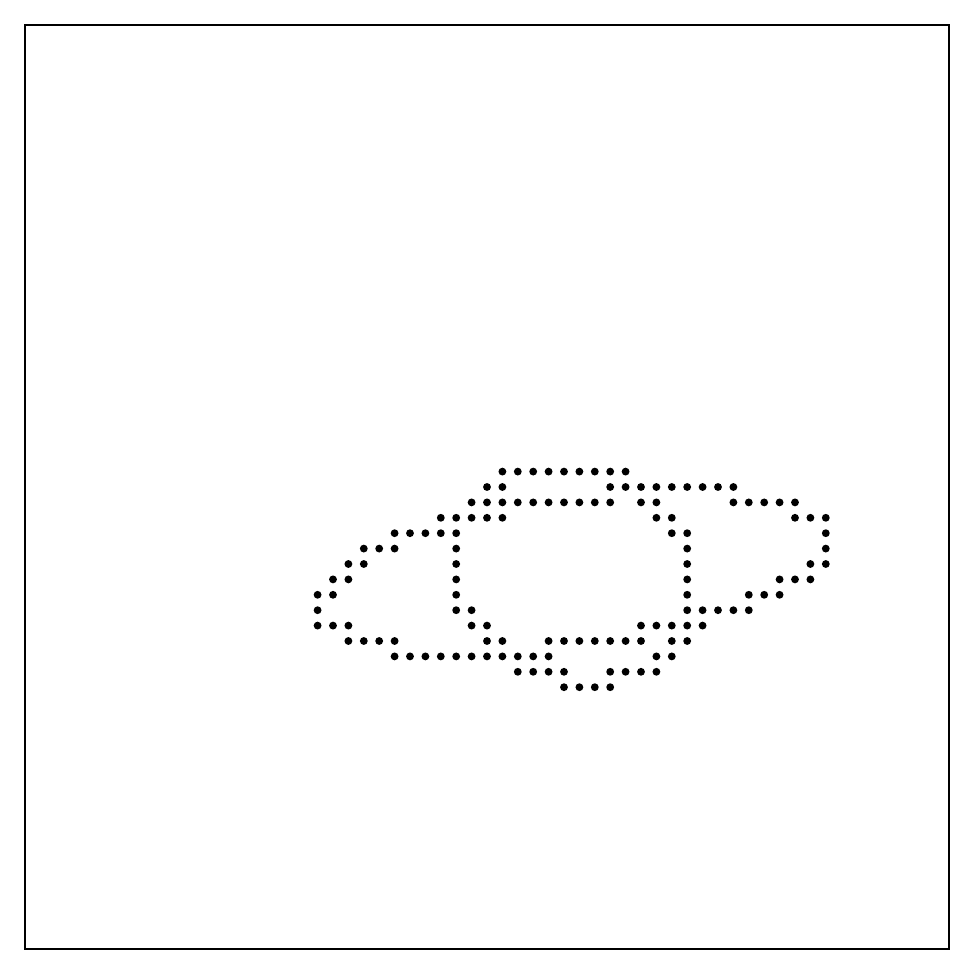} & 
		\includegraphics[scale=0.25]{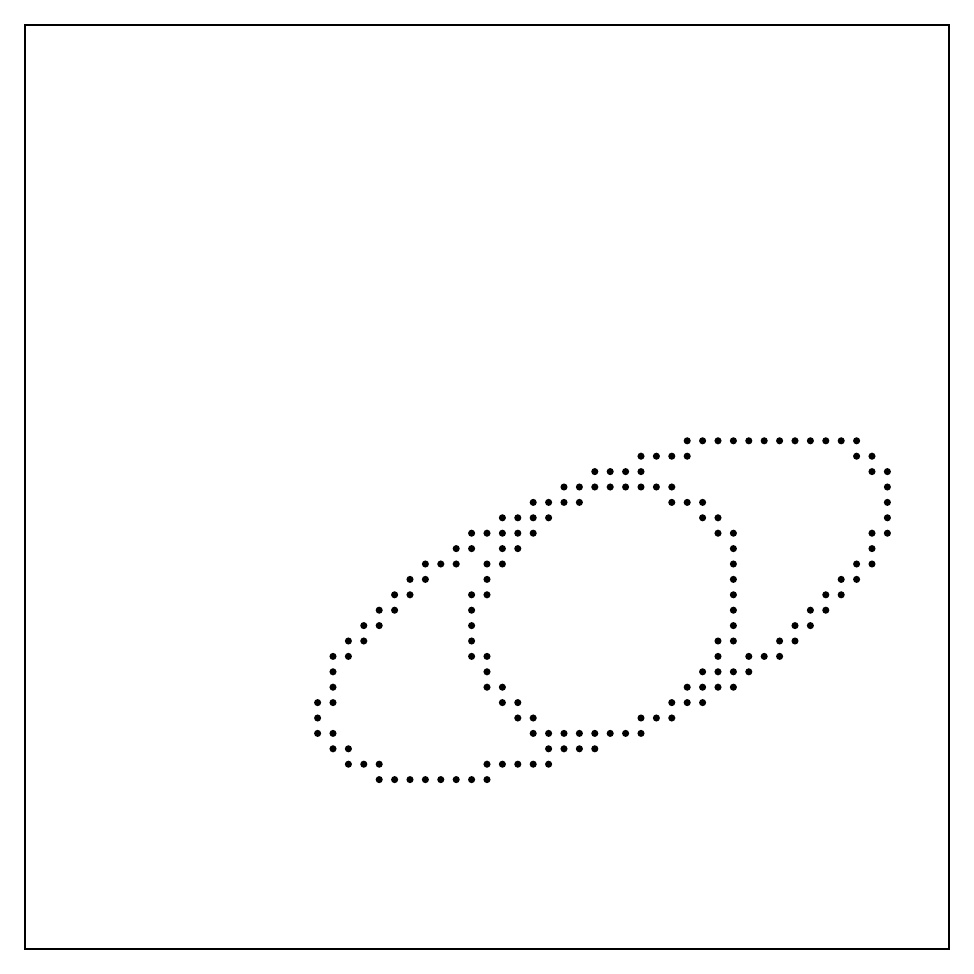} & 
		\includegraphics[scale=0.25]{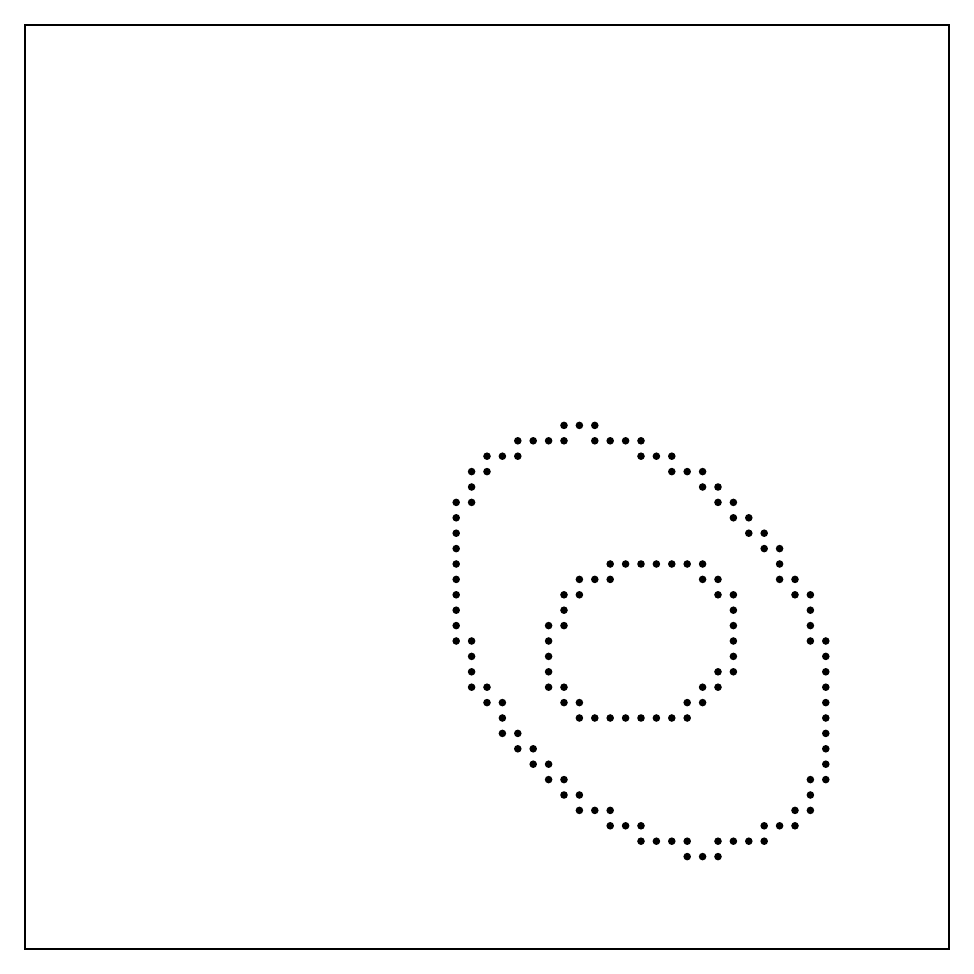}
	\end{tabular}
	\caption{Five sample images from the nested ellipses dataset in~\citep{janati2020debiased}.}\label{fig:ellipses}
\end{figure}

Figure~\ref{fig:ellipsesbarycenters} contains a visual comparison of the exact barycenter computed by our algorithm and the approximate barycenters produced by the most competitive algorithms tested in the recent paper \citep{janati2020debiased}. All are fixed-support algorithms except for the algorithm of~\citep{luise2019sinkhorn}.

\begin{figure}
	\centering
	\scriptsize{
		\begin{tabular}{@{}ccccc@{}}
			Ours &
			MAAIPM \citep{ge2019interior} &
			Debiased \citep{janati2020debiased} &
			IBP \citep{solomon2015convolutional} &
			Frank-Wolfe \citep{luise2019sinkhorn} 
			\\
			\begin{tabular}{@{}c@{}}\includegraphics[scale=0.25]{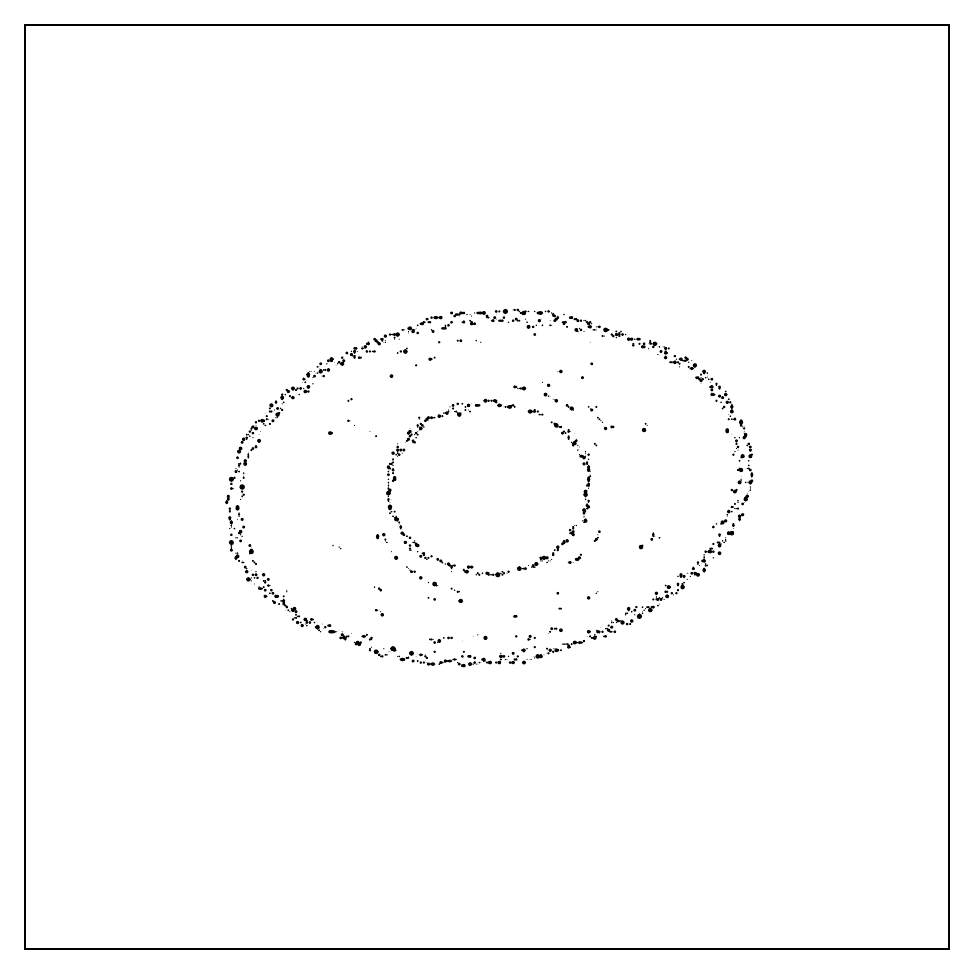}
			\end{tabular} &
			\begin{tabular}{@{}c@{}}\includegraphics[scale=0.25]{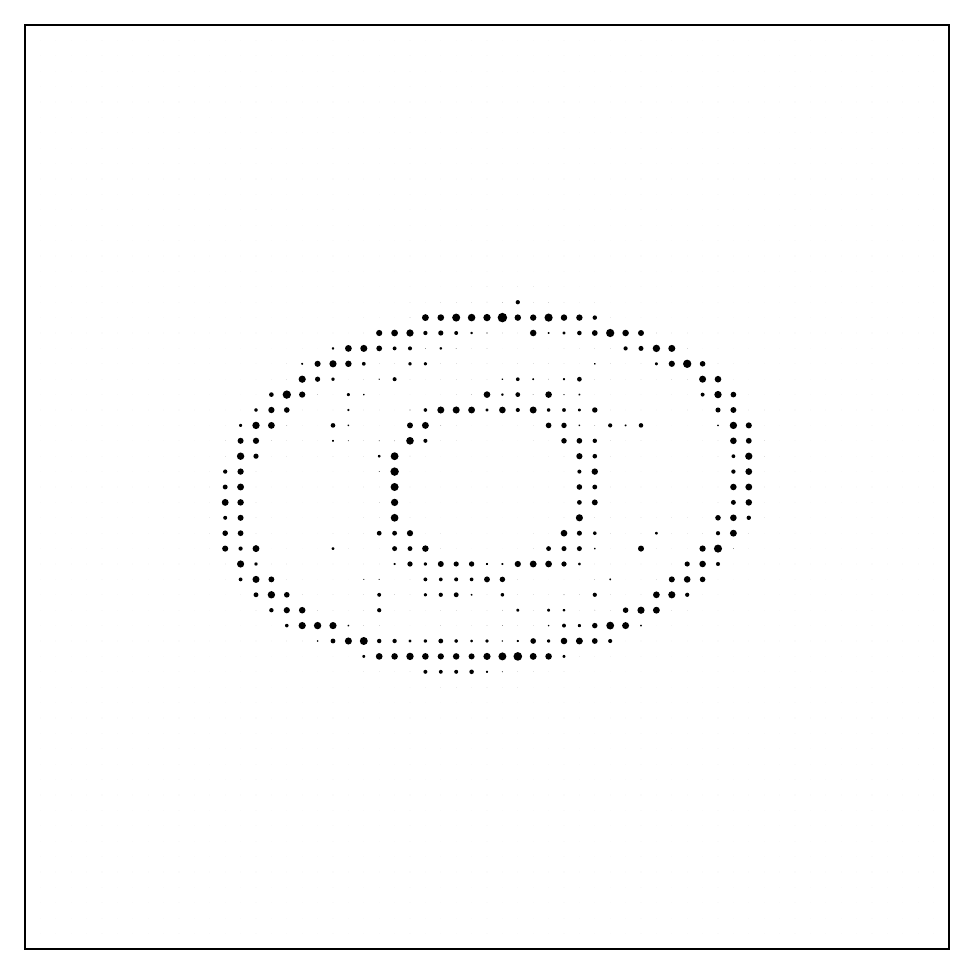}
			\end{tabular} &
			\begin{tabular}{@{}c@{}}\includegraphics[scale=0.25]{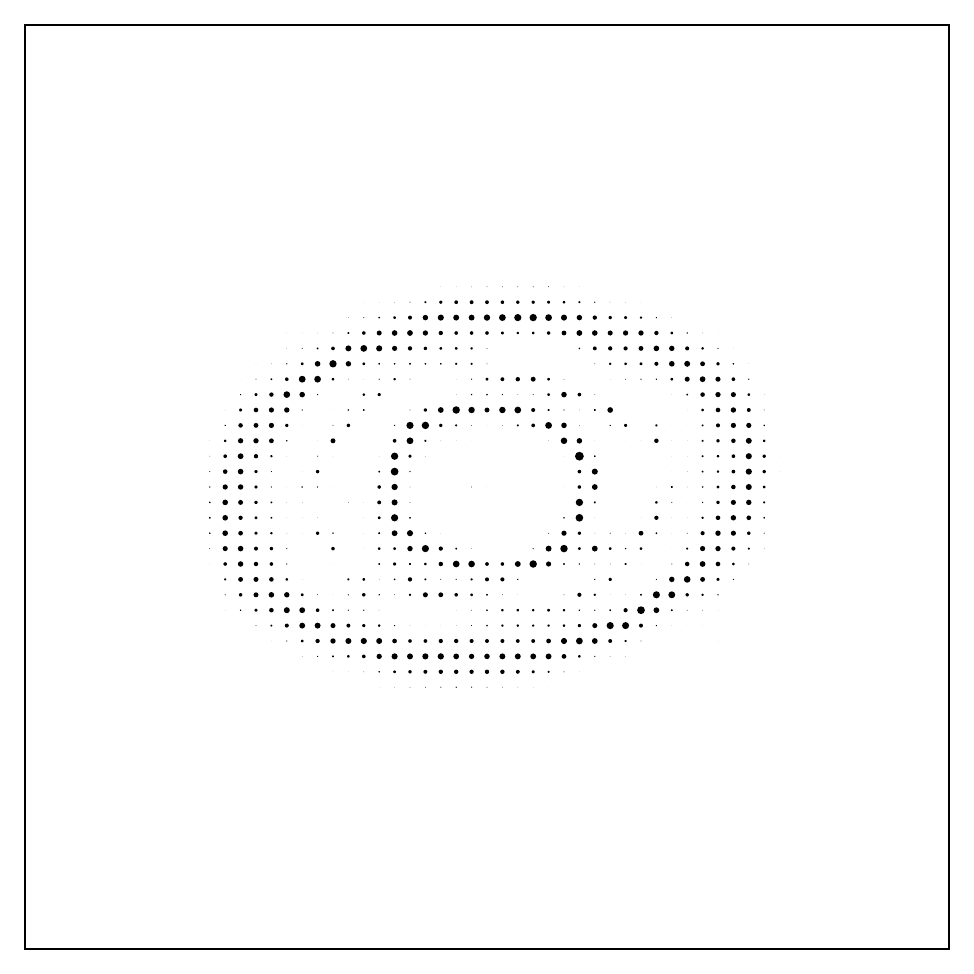}
			\end{tabular} &
			\begin{tabular}{@{}c@{}}\includegraphics[scale=0.25]{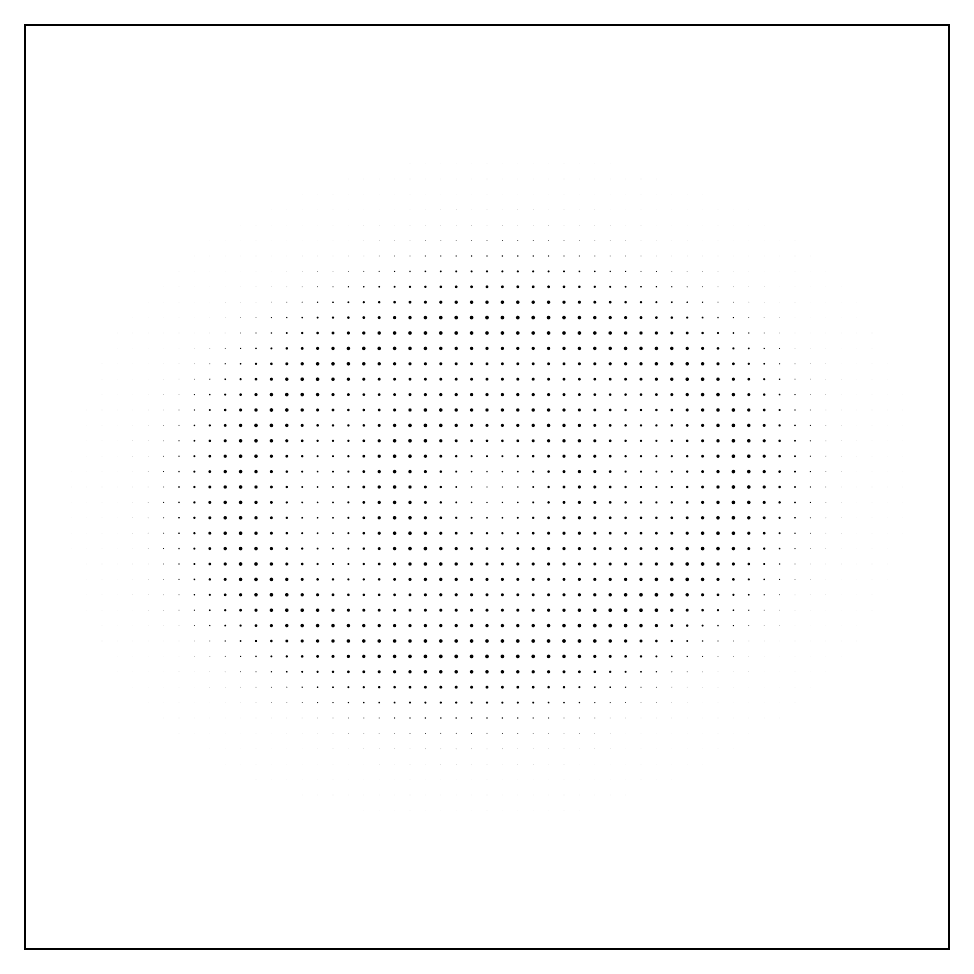}
			\end{tabular} &
			\begin{tabular}{@{}c@{}}\includegraphics[scale=0.25]{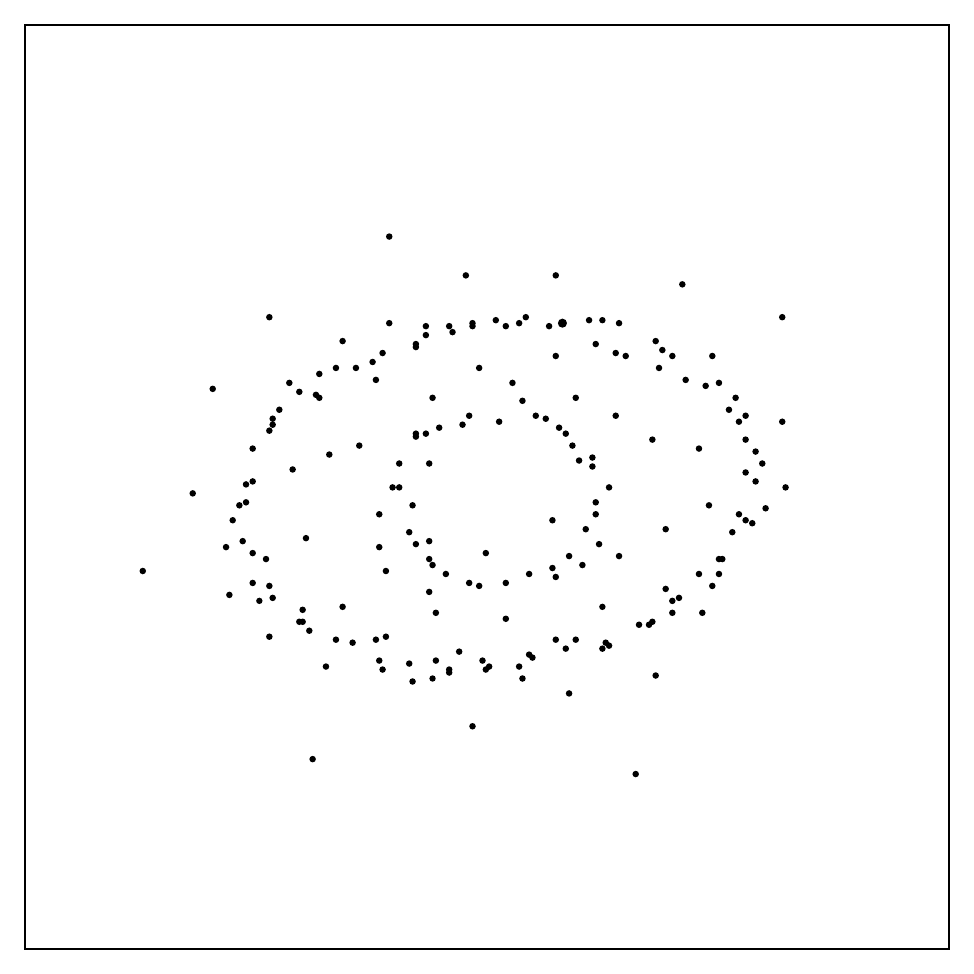}
			\end{tabular}
			\\
			
			Cost: 0.2666 (exact) & Cost: 0.2671 & Cost: 0.2675 & Cost: 0.2723 & Cost: 0.2790
	\end{tabular}}
	\caption{Comparison of barycenter algorithms on a standard benchmark dataset of ellipse images. Each barycenter atom is plotted as a disk with area proportional to its probability mass. All compared methods are run with the code, parameter choices, and dataset of \citep{janati2020debiased}. 
	}\label{fig:ellipsesbarycenters}
\end{figure}

\par Of the compared algorithms, MAAIPM gives the most accurate barycenter approximation. It uses a $60 \times 60$ grid fixed-support assumption. Although MAAIPM solves this fixed-support problem exactly, the support of an optimal barycenter does not lie on a $60 \times 60$ grid, and thus MAAIPM only computes an approximate barycenter. A natural approach is to run MAAIPM on a finer grid discretization, i.e., finer than $60 \times 60$.
However, this does not work, since MAAIPM does not scale to much larger grid sizes (see also Figure~\ref{fig:exp:ipm}).

The other two fixed-support algorithms are based on entropic regularization: debiased Sinkhorn barycenters \citep{janati2020debiased} and IBP \citep{solomon2015convolutional}. These use entropic regularization parameter $\gamma = 0.002$ and the same $60 \times 60$ fixed-support approximation as MAAIPM. Again, these methods produce suboptimal barycenters. While these methods scale to larger grid sizes than MAAIPM, this results in qualitatively similar and blurry visualizations as in this $60 \times 60$ case due to the entropic regularization.

The final compared algorithm is the free-support algorithm of \citep{luise2019sinkhorn}, which is based on the Frank-Wolfe algorithm. Although this method does not make a fixed-support assumption, it still returns an approximate solution due to the approximate nature of the Frank-Wolfe algorithm.

\section{Discussion}\label{sec:disc}
Wasserstein barycenters are used in many applications despite the fact that fundamental questions about their computational complexity are open---in particular, it was previously unknown whether barycenters can be computed in polynomial time. This paper addresses this issue by giving the first algorithm that, in any fixed dimension $d$, solves the barycenter problem exactly or to high precision in polynomial time.

\par Now, while our result answers the polynomial-time computability of barycenters from a \emph{theoretical} perspective, from a \emph{practical} perspective it is still a hard and interesting problem to compute high-precision barycenters for large-scale inputs. Indeed, our current implementation is not efficient beyond moderate-scale inputs; and while existing algorithms such as IBP scale to larger inputs, they have limited accuracy. Moreover, all existing algorithms pay for the curse of dimensionality in one way or another. We emphasize that our implementation does not contain further optimizations or heuristics; it is an interesting direction for future work to investigate potential such options including pruning cutting planes, warm starts, and specially tailored algorithms for the power diagram intersections in \S\ref{ssec:alg:step2} (e.g., in $\R^2$ or $\R^3$, settings which commonly arise in image processing and computer graphics applications).

\par We remark that another research direction suggested by the results of this paper is the possibility of solving ``structured'' Multimarginal Optimal Transport problems in $\poly(n,k)$ time despite the fact that they have $n^k$ exponentially many variables. Such problems arise in a variety of applications throughout data science and applied mathematics. However, the context of the application results in the cost tensor $C$ having different types of ``structure'', which in turn necessitates developing different techniques for efficiently solving the corresponding separation oracle (see the discussion in \S\ref{sec:alg}). We pursue this direction and develop such techniques in upcoming work~\citep{AltBoi20mot}.

\subsection{Extension to Wasserstein geometric median}\label{ssec:dis:ext}

We conclude the paper by briefly mentioning that the techniques we develop in this paper extend to solving several related problems. In particular, this gives the first polynomial-time algorithms for computing the \emph{Wasserstein geometric median}
\begin{align}
	\inf_{\nu} \sum_{i=1}^k \lambda_i \rho(\mu_i,\nu)
	\label{eq:geomedian}
\end{align}
of probability measures $\mu_1, \dots, \mu_k$, where $\rho$ is the $1$-Wasserstein distance $\rho$ (a.k.a., Earth Mover's Distance) over any of the popular ground metrics $\ell_{1}$, $\ell_2$, or $\ell_{\infty}$. See, e.g.,~\citep{Bac14} and the references within for background on this problem and its applications.

\par For brevity, we just state this result for exact computation (i.e., the analog of Theorem~\ref{thm:exactbary}). The high-precision analog of Theorem~\ref{thm:mainbary} also holds analogously. 

\begin{theorem}\label{thm:gm}
	Consider the space $\R^d$ endowed with any of the ground metrics $\ell_1$, $\ell_2$, or $\ell_{\infty}$.
	There is an algorithm that, given $k$ distributions each supported on $n$ atoms in $\R^d$ and a weight vector $\lambda$, computes an exact Wasserstein geometric median in $\poly(n,k,\log U)$ time, where $\log U$ is the bits of precision in the input. Moreover, this solution has support size at most $nk-k+1$. 
\end{theorem}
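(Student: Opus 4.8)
The plan is to mirror the three-part strategy used for barycenters. First, reduce the geometric median problem~\eqref{eq:geomedian} to an exponential-size~\eqref{MOT-P} whose cost $C$ encodes the ground metric. Let $\|\cdot\|$ denote the chosen ground metric ($\ell_1$, $\ell_2$, or $\ell_\infty$), so that $\rho(\mu_i,\nu)$ is the associated $1$-Wasserstein distance, and set $C_{\jvec} := \min_{y\in\R^d}\sum_{i=1}^k\lambda_i\|x_{i,j_i}-y\| \in \Rntk$. I claim the exact analog of Lemma~\ref{lem:bary-to-mot} holds: the optimal value of~\eqref{eq:geomedian} equals that of~\eqref{MOT-P} with this $C$, and pushing an optimal $P$ forward under $(X_1,\dots,X_k)\mapsto y^{*}(X_1,\dots,X_k)$ — where $y^{*}$ is a minimizer in the definition of $C$ — yields an optimal $\nu$ with $|\supp(\nu)|\le|\supp(P)|$. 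The ``$\le$'' direction is immediate, since $(y^{*}(X),X_i)$ is a valid coupling of $\nu$ and $\mu_i$, so $\sum_i\lambda_i\rho(\mu_i,\nu)\le\langle P,C\rangle$. For ``$\ge$'', take any $\nu$ with optimal couplings $\pi_i\in\cM(\mu_i,\nu)$, glue them along their common $\nu$-marginal into a joint law $\Pi$, let $P\in\Coup$ be its marginal on the first $k$ coordinates, and estimate $\sum_i\lambda_i\rho(\mu_i,\nu)=\E_{\Pi}\big[\sum_i\lambda_i\|X_i-Z\|\big]\ge\E_{\Pi}\big[\min_{y}\sum_i\lambda_i\|X_i-y\|\big]=\langle P,C\rangle$. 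Combined with Lemma~\ref{lem:vertex-sparse}, a vertex solution of~\eqref{MOT-P} gives a $\nu$ with support at most $nk-k+1$. Hence, by Proposition~\ref{prop:generalmot} (which is cost-agnostic), it remains to bound the bit-complexity of $C$ and to implement $\SEP$ for this $C$ in $\poly(n,k,\log U)$ time.

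For $\ell_1$ and $\ell_\infty$ this is clean. Each summand $h_{i,j}(y):=\lambda_i\|x_{i,j}-y\|-[p_i]_{j}$ is a maximum of $O(1)$ affine functions of $y$ for fixed $d$ ($2^d$ pieces for $\ell_1$, $2d$ for $\ell_\infty$), so $C_{\jvec}$ is the optimal value of a polynomial-size rational LP and has $\poly(k,\log U)$ bits, and Proposition~\ref{prop:generalmot} applies verbatim. To implement $\SEP$, recall it must return a minimizer of $\min_{\jvec}\min_{y}g(\jvec,y)$ with $g(\jvec,y)=\sum_i h_{i,j_i}(y)$. Swapping the minimizations gives $\min_{y}\sum_i\min_{j}h_{i,j}(y)$. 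Let $\cH$ be the arrangement of all $\poly(n,k)$ hyperplanes on which two affine pieces of $h_{i,\cdot}$ and $h_{i,\cdot}$ belonging to a common marginal $i$ coincide; by Lemma~\ref{lem:hyperplaneintersectionenumeration} its $\poly(n,k)$ cells are enumerable in $\poly(n,k,\log U)$ time for fixed $d$. On the closure of each cell $Q$, every $h_{i,j}$ is a fixed affine function, so $j_i^{*}(Q):=\argmin_j h_{i,j}|_{Q}$ is well-defined and constant; put $\jvec(Q):=(j_1^{*}(Q),\dots,j_k^{*}(Q))$. Exactly as in Lemma~\ref{lem:sep-correct}, $\min_{\jvec}\min_y g=\min_{Q}\min_{y\in\overline Q}g(\jvec(Q),y)$, where each inner minimization is a polynomial-size LP; returning $\jvec(Q^{*})$ for the best cell implements $\SEP$. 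Together with Proposition~\ref{prop:generalmot} and the reduction above, this proves Theorem~\ref{thm:gm} for $\ell_1$ and $\ell_\infty$.

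For $\ell_2$ the outline is identical but the geometry is curved, and this is where the main difficulty lies. With $w_i=p_i/\lambda_i$, the cells $E_{i,j}=\{y:\|x_{i,j}-y\|_2-[w_i]_j<\|x_{i,j'}-y\|_2-[w_i]_{j'}\ \forall j'\neq j\}$ are now cells of an additively weighted (Apollonius) Voronoi diagram, bounded by bounded-degree algebraic surfaces; their common refinement across the $k$ marginals is an arrangement of $\poly(n,k)$ algebraic surfaces of bounded degree in $\R^d$, which for fixed $d$ has $\poly(n,k)$ complexity and is constructible in $\poly(n,k,\log U)$ time by standard algorithms for arrangements of low-degree varieties (the analogs of Lemmas~\ref{lem:powdiagramruntime} and~\ref{lem:hyperplaneintersectionenumeration}). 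On each cell $\jvec$ is constant and $\min_{y\in\overline{F_{\jvec}}}g(\jvec,y)$ is a weighted Fermat--Weber problem over a semialgebraic region, solvable for fixed $d$ by polynomial-time real-algebraic-geometry methods. The obstacle is exactness: $C_{\jvec}$ is in general irrational, so implementing $\SEP$ exactly requires comparing these values across the $\poly(n,k)$ candidate cells, i.e., comparing sums of $k$ algebraic radicals. For fixed $d$ this can be carried out exactly using real-algebraic-number arithmetic (root isolation together with separation bounds), which is what underlies the ``exact'' guarantee; alternatively one routes through the high-precision analog of Theorem~\ref{thm:mainbary} and rounds. The remaining bookkeeping — bit-complexity of the $\SEP$ queries, recovering the sparse vertex $P$, and pushing it forward to $\nu$ — is identical to \S\ref{ssec:final}.
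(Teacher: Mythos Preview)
Your overall strategy---reduce~\eqref{eq:geomedian} to~\eqref{MOT-P} with cost $C_{\jvec}=\min_y\sum_i\lambda_i c(x_{i,j_i},y)$, invoke the cost-agnostic Proposition~\ref{prop:generalmot}, and implement $\SEP$ by partitioning $\R^d$ via the appropriate weighted Voronoi diagrams for $c$---is exactly the paper's. For $\ell_1$ and $\ell_\infty$ the two arguments coincide: the paper packages the partition as ``$c$-diagrams'' (Definition~\ref{def:diag-c}) with polynomially many affine facets and then appeals to Lemma~\ref{lem:hyperplaneintersectionenumeration}, which is your hyperplane-arrangement computation in different clothing. Your gluing argument for the MOT reduction is also what the paper intends when it says the analogous LP formulation ``can be shown''.

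The substantive divergence is in the $\ell_2$ case. The paper dispatches it in two lines: it asserts that evaluating $\min_y\sum_i\lambda_i\|x_i-y\|_2$ is ``trivially'' polynomial-time (its property~(ii)) and cites \S6.4 of~\citep{aurenhammer1987power} for the additively-weighted Voronoi diagram complexity (its property~(iv)). You go further and correctly flag that $C_{\jvec}$ is a Fermat--Weber value, hence generically irrational, so that \emph{exact} evaluation and comparison across cells is not obviously in $\poly(n,k,\log U)$ in the bit model---indeed Proposition~\ref{prop:generalmot} as stated assumes $C$ has finite bit complexity. This is a genuine issue that the paper's sketch does not address. That said, your proposed remedy (real-algebraic-number arithmetic with separation bounds) does not obviously close the gap either: the algebraic degree of a Fermat--Weber value on $k$ points is not known to be polynomial in $k$, and the comparisons you need are close relatives of the sum-of-square-roots problem, whose membership in $\P$ is famously open. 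So for $\ell_2$ you have surfaced a real difficulty that the paper elides; neither argument is fully rigorous as an \emph{exact} $\poly(n,k,\log U)$ guarantee, and the safe route is the one you mention parenthetically---pass through the high-precision analog of Theorem~\ref{thm:mainbary}.
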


\par Since the proof of this extension is nearly identical, we sketch only the differences. First, observe that this geometric median problem~\eqref{eq:geomedian} is identical to the Wasserstein barycenter problem~\eqref{eq:intro:bary}, except that it depends on the $1$-Wasserstein distance $\rho$ rather than the squared $2$-Wasserstein distance $\cW$. It can be shown that the geometric median problem admits an analogous LP formulation as a Multimarginal Optimal Transport problem, just as in~\eqref{MOT-P}, except that here the cost tensor $C \in \Rntk$ has entries
\begin{align}
	C_{\jvec} = \min_{y \in \R^d} \sum_{i=1}^k \lambda_i c(x_{i,j_i},y)
	\label{C-geomedian}
\end{align}
where $c(\cdot,\cdot)$ denotes the relevant ground metric. This identical to the cost~\eqref{C-WASS} for barycenters, except that the squared Euclidean distance is replaced by the ground metric $c(\cdot,\cdot)$.

\par Now, since step $1$ of our algorithm---i.e., reducing finding a vertex solution of~\eqref{MOT-P} to solving the separation oracle for~\eqref{MOT-D}---holds for any Multimarginal Optimal Transport problem (see the discussion in \S\ref{sec:alg}), it remains only to adapt step $2$ of our algorithm. That is, it suffices to show that $\SEP$ can be efficiently implemented for the cost $C$ in~\eqref{C-geomedian}.

\par This extension requires only one small change: replace power diagrams with the analogous partitions of space that are defined with $\|x-y\|^2$ replaced by $c(x,y)$. These diagrams are sometimes called ``additively-weighted Voronoi diagrams with metric $c$''; for shorthand, we just call them ``$c$-diagrams'' here.

\begin{defin}[$c$-diagram]\label{def:diag-c}
	The $c$-diagram for points $z_1,\dots, z_n \in \R^d$ and radii $r_1, \dots, r_n \geq 0$ is the cell complex whose cells $E_1, \dots, E_n$ are given by
	\begin{align*}
		E_j = \{y \in \RR^d \, : \, c(z_j,y) - r_j^2 < c(z_{j'},y) - r_{j'}^2, \, \forall j' \neq j\}.
	\end{align*}
\end{defin}
Specifically, adapt the definition~\eqref{eq:Eij} of the sets $E_{i,j}$ in this way to $\{y \in \RR^d : c(x_{i,j},y) - [w_i]_j < c(x_{i,j'},y) - [w_i]_{j'}, \, \forall j' \neq j \}$. It is straightforward to check that our algorithm for the $\SEP$ oracle in \S\ref{ssec:alg:step2} and its proof then extend unchanged so long as the ground metric $c(\cdot,\cdot)$ satisfies the following basic properties, the first three of which are somewhat trivial but needed for rigor:
\begin{itemize}
	\item[(i)] There is a polynomial-time algorithm for evaluating $c(x,y)$ given points $x$ and $y$. 
	\item[(ii)] There is a polynomial-time algorithm for evaluating the geometric median $\min_{y \in \R^d} \sum_{i=1}^k \lambda_i c(x_i,y)$ given points $x_1, \dots, x_k$ and weights $\lambda_1,\dots, \lambda_k$.
	\item [(iii)] The closure of the cells in any $c$-diagram covers $\R^d$. 
	\item[(iv)] The intersection of any $k$ $c$-diagrams on $n$ points has $\poly(n,k)$ many non-empty subsets that can be enumerated in polynomial time.
\end{itemize}

The first three properties are trivially satisfied by all of the ground metrics $c(x,y) = \|x-y\|_1$, $\|x-y\|_2$, and $\|x-y\|_{\infty}$. Therefore in order to prove Theorem~\ref{thm:gm}, it remains only to verify property (iv).
For $\ell_1$ and $\ell_{\infty}$, $c$-diagrams are cell complexes with polynomially many affine facets which can moreover be computed in polynomial time \citep{klein1989concrete}, and thus the claim follows by Lemma~\ref{lem:hyperplaneintersectionenumeration}.
For $\ell_2$, the corresponding $c$-diagram is an additively-weighted Voronoi diagram, for which the desired complexity bounds are also known (see section 6.4 of~\citep{aurenhammer1987power}).

\section*{Acknowledgements.}
We thank Sinho Chewi, Jonathan Niles-Weed, and Pablo Parrilo for helpful conversations.

\footnotesize
\bibliographystyle{abbrv}
\bibliography{bary}{}

\end{document}